\newtheorem{theorem}{Theorem}[]
\newtheorem{proposition}[theorem]{Proposition}
\newtheorem{lemma}[theorem]{Lemma}
\newtheorem{conjecture}[theorem]{Conjecture}
\newtheorem{question}[]{Question}
\newtheorem{problem}[question]{Problem}
\newtheorem{case}[]{Case}
\newtheorem{subcase}{Subcase}[case]
\numberwithin{equation}{section}
\title{Proper conflict-free degree-choosability of outerplanar graphs}
\author{
    Masaki Kashima\thanks{Faculty of Science and Technology, Keio University, Yokohama, Japan. Email: masaki.kashima10@gmail.com} \quad  
	Riste \v{S}krekovski\thanks{Faculty of Mathematics and Physics, University of Ljubljana, Faculty of Information Studies in Novo Mesto, and Rudolfovo - Science and Technology Centre Novo Mesto, Slovenia. Email:skrekovski@gmail.com} \quad 
	Rongxing Xu\thanks{School of Mathematical and Science, Zhejiang Normal University, Jinhua, China. Email:xurongxing@zjnu.edu.cn}}
\begin{document}

\maketitle

\begin{abstract}
    A proper coloring $\phi$ of $G$ is called a proper conflict-free coloring of $G$ if for every non-isolated vertex $v$ of $G$, there is a color $c$ such that $|\phi^{-1}(c)\cap N_G(v)|=1$.
    As an analogy to degree-choosability of graphs, the authors recently, in a previous paper, introduced the notion of proper conflict-free $({\rm degree}+k)$-choosability of graphs.
    For a non-negative integer $k$, a graph $G$ is proper conflict-free $({\rm degree}+k)$-choosable if for any list assignment $L$ of $G$ with $|L(v)|\geq d_G(v)+k$ for every vertex $v\in V(G)$, $G$ admits a proper conflict-free coloring $\phi$ such that $\phi(v)\in L(v)$ for every vertex $v\in V(G)$.
    In this paper, we show that every connected outerplanar graph other than the $5$-cycle is proper conflict-free $({\rm degree}+2)$-choosable.
    This bound is tight in the sense that there are infinitely many connected outerplanar graphs that are not proper conflict-free $({\rm degree}+1)$-choosable.
    We conclude the paper with two questions for further work.
\end{abstract}
\textbf{Key Words:} proper conflict-free coloring, list coloring, degree-choosability, outerplanar graph

\section{Introduction}\label{sec:intro}

Throughout the paper, we only consider simple, finite, and undirected graphs.
Let $\mathbb{N}$ be the set of positive integers.
For a positive integer $k$, let $[k]$ denote the set of integers $\{1,2,\dots , k\}$.

For a graph $G$, a mapping $\phi$ from $V(G)$ to $\mathbb{N}$ is called a \emph{proper coloring of $G$} if $\phi(u)\neq \phi(v)$ for every edge $uv\in E(G)$.
A proper coloring of a graph $G$ in which every vertex of $G$ maps to an integer in $[k]$ is called a proper $k$-coloring of $G$.

Recently, Fabrici, Lu\v{z}ar, Rindo\v{s}ov\'{a}, and Sot\'{a}k~\cite{FLRS2023} introduced a new variation of coloring named proper conflict-free coloring of graphs.
For a graph $G$, a mapping $\phi$ from $V(G)$ to $\mathbb{N}$ is called a \emph{proper conflict-free coloring of $G$} if $\phi$ is a proper coloring of $G$ and every non-isolated vertex $v\in V(G)$ has a color $c$ such that $|\phi^{-1}(c)\cap N_G(v)|=1$, where $N_G(v)$ is the (open) neighborhood of $v$.
A proper conflict-free coloring of a graph $G$ such that every vertex of $G$ maps to an integer in $[k]$ is called a \emph{proper conflict-free $k$-coloring of $G$}.
For a (partial) coloring $\phi$ of $G$ and a vertex $v\in V(G)$, let $\mathcal{U}_{\phi}(v,G)$ denote the set of colors that appear exactly once in the neighborhood of $v$.
Using this notation, a proper conflict-free coloring $\phi$ of $G$ is a proper coloring $\phi$ of $G$ such that $\mathcal{U}_{\phi}(v,G)\neq \emptyset$ for every non-isolated vertex $v\in V(G)$.
The \emph{proper conflict-free chromatic number} of a graph $G$, denoted by $\chi_{\text{pcf}}(G)$, is the least integer $k$ such that $G$ admits a proper conflict-free $k$-coloring.

One major problem in proper conflict-free coloring is the following Brooks-type conjecture, which was posed by Caro, Petru\v{s}evski, and \v{S}krekovski~\cite{CPS2023}.

\begin{conjecture}\label{conj:brooks}
    For every graph $G$ with maximum degree $\Delta\geq 3$, $\chi_{{\rm pcf}}(G)\leq \Delta+1$.
\end{conjecture}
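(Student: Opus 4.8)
The statement is a genuine Brooks-type conjecture, and unlike Brooks' theorem for ordinary colorings it remains open in full generality; so what follows is an attack strategy rather than a guaranteed route to a complete proof. The plan is to split the problem into two regimes according to the size of $\Delta$, since the tools that work for small and large maximum degree are quite different. Note also that the restriction $\Delta\geq 3$ is essential at the boundary: for $\Delta=2$ the $5$-cycle already violates the bound (it needs $5>3$ colors), which is exactly the exceptional graph flagged in the abstract, so no clean upper bound can hold uniformly for $\Delta=2$.

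For large $\Delta$ I would use the probabilistic method. First I would reserve a small palette of "repair" colors and color the bulk of the graph by a random proper coloring drawn from the remaining $\Delta+1-o(\Delta)$ colors. Both the proper condition and the conflict-free condition can be phrased as bad events supported on a bounded-radius neighborhood of each vertex, so the Lov\'asz Local Lemma---or, to shave the constant, entropy compression / the algorithmic LLL---applies once one checks that each bad event has small probability and bounded dependency. The delicate point is that the conflict-free event at $v$, namely that \emph{no} color appears exactly once in $N_G(v)$, is not monotone and its probability fails to decay when $d_G(v)$ is small; so I would peel off the low-degree vertices and handle each of them deterministically by handing it a privately reserved, uniquely-colored neighbor.

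For small $\Delta$, and in particular the hardest case $\Delta=3$, I would argue via a minimal counterexample and discharging. Take $G$ to be vertex-minimal, and aim to produce a list of reducible configurations whose combined unavoidability (established by a discharging argument on a sparse/planar-type structure) contradicts minimality. Reducibility here means: delete or contract a small configuration, color the rest by the inductive hypothesis, and then extend while simultaneously restoring properness and a uniquely-colored neighbor for every affected vertex. This is where the paper's degree-choosability viewpoint pays off: reading the $\Delta+1$ colors as lists of size $d_G(v)+k$ lets me reuse list-coloring extension lemmas, choosing the color of a low-degree boundary vertex so as to create the required uniqueness at its neighbors, and using Kempe-type swaps to relocate an offending repeated color.

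The main obstacle is the exact additive constant. First-moment and greedy arguments comfortably yield $\chi_{\mathrm{pcf}}(G)\le \Delta+O(1)$, or even $(1+\varepsilon)\Delta$, but forcing the bound down to precisely $\Delta+1$ removes all slack in the interplay between the proper constraint and the conflict-free constraint. The hard part will be that conflict-free constraints are global in flavor: fixing a unique color at $v$ can destroy uniqueness at a neighbor $u$, and repairing $u$ can cascade further. I therefore expect the crux to be designing local recoloring / Kempe-chain repair operations that provably terminate and never re-create a violated vertex---the exact coupling phenomenon that makes a tight Brooks-type proof for this coloring notion so elusive.
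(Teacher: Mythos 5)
The statement you were asked to prove is not proved in the paper at all: it is Conjecture~\ref{conj:brooks}, an open problem attributed to Caro, Petru\v{s}evski, and \v{S}krekovski~\cite{CPS2023}, and the paper only uses it as motivation for a different (and weaker, list-type) line of results about outerplanar graphs. So there is no proof in the paper to compare yours against, and your own opening admission is accurate: what you have written is a research programme, not a proof. As a proof attempt it therefore has an unavoidable, global gap --- every step is conditional on constructions (the LLL bad-event analysis with the exact palette size $\Delta+1$, the unavoidable set of reducible configurations for $\Delta=3$, the terminating Kempe-type repair procedure) that you do not supply and that nobody currently knows how to supply. Your diagnosis of where the difficulty lies is sound, though: the large-$\Delta$ regime is essentially the content of the cited work of Cranston and Liu~\cite{CLarxiv} (and, asymptotically, Liu and Reed~\cite{LRarxiv}), the $5$-cycle obstruction at $\Delta=2$ is exactly the exceptional graph the paper works around, and the genuinely open territory is small $\Delta$, especially subcubic graphs, where the non-monotone conflict-free condition resists both probabilistic and discharging arguments in the way you describe.

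Two smaller remarks. First, be careful with the claim that first-moment or greedy arguments ``comfortably'' give $\chi_{\mathrm{pcf}}(G)\le \Delta+O(1)$: the known unconditional bounds of this type are not a one-line greedy argument, and for small $\Delta$ even $\Delta+O(1)$ with a small explicit constant is nontrivial. Second, if your goal is to contribute to this paper rather than to the conjecture itself, the productive target is the list version actually studied here (proper conflict-free $(\mathrm{degree}+k)$-choosability, Conjectures~\ref{conj:general} and~\ref{conj:general degree+2}), where the paper's technique --- induction on blocks plus an unavoidable set of ears and ear-chains (Lemma~\ref{lem:outerplanar unavoidable}) with extension lemmas (Lemmas~\ref{lem:outerplanar ear} and~\ref{lem:precolored path}) --- is a concrete instance of the reducible-configuration strategy you sketch, carried out for outerplanar graphs.
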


Toward this conjecture, studies on Brooks-type results for proper conflict-free coloring have been established in the literature~\cite{CCKP2025,CLarxiv,KP2024,L2024,LRarxiv,LY2013}.

Motivated by Conjecture~\ref{conj:brooks} and the classical concept of degree-choosability, we introduced the concept of proper conflict-free $({\rm degree}+k)$-choosability of graphs in \cite{KSXarxiv}, where we investigated the proper conflict-free $({\rm degree}+k)$-chooability of graphs with bounded maximum degree.

A list assignment $L$ of a graph $G$ is a mapping from $V(G)$ to the power set of $\mathbb{N}$.
For a mapping $f$ from $V(G)$ to positive integers, a list assignment $L$ of a graph $G$ is called an $f$-list assignment of $G$ if $|L(v)|\geq f(v)$ for every vertex $v\in V(G)$.
In particular, if $f$ is the constant map from $V(G)$ to a positive integer $k$, an $f$-list assignment of $G$ is called a $k$-list assignment of $G$.

For a given graph $G$ and a list assignment $L$ of $G$, a proper conflict-free $L$-coloring of $G$ is a proper conflict-free coloring $\phi$ of $G$ such that $\phi(v)\in L(v)$ for every vertex $v\in V(G)$.
For a non-negative integer $k$, a graph $G$ is \emph{proper conflict-free $({\rm degree}+k)$-choosable} if $G$ admits a proper conflict-free $L$-coloring for any $f$-list assignment of $G$, where $f(v)=d_G(v)+k$ for every vertex $v\in V(G)$.
In \cite{KSXarxiv}, we pose the following conjecture that states a fundamental problem in this concept.

\begin{conjecture}\label{conj:general}
	There exists an absolute constant $k$ such that every graph is proper conflict-free $({\rm degree}+k)$-choosable.
\end{conjecture}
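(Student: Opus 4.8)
The plan is to attack the conjecture probabilistically, separating the vertices according to their degree and treating the two regimes with different tools. Fix a large absolute threshold $D$ (to be calibrated against the eventual constant $k$), and call a vertex \emph{high} if $d_G(v)\ge D$ and \emph{low} otherwise. The subgraph induced by the low vertices has maximum degree less than $D$, so there the bounded-maximum-degree result of \cite{KSXarxiv} already yields proper conflict-free $({\rm degree}+k_0)$-choosability for some absolute constant $k_0$; the role of $D$ and of the extra list slack is to make the high part behave like a ``random'' instance with a large amount of room. The overall scheme is: first color the high vertices, then complete the coloring on the low vertices, using the slack to absorb the colors already spent by high neighbors.

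For the high phase I would use the Lov\'asz Local Lemma in an asymmetric, degree-weighted form. Color each high vertex independently and uniformly from its list, and introduce two families of bad events: for each edge $uv$ between high vertices the event $\phi(u)=\phi(v)$, and for each high vertex $v$ the event that no color occurs exactly once among its high neighbors, i.e. that $\mathcal{U}_{\phi}(v,G)$ restricted to the high part is empty. The edge events have probability at most $1/(\max\{d_G(u),d_G(v)\}+k)$, which is small precisely because both endpoints are high. The conflict-free events are the delicate ones, but here they are cheap: if $v$ has many neighbors each drawing from a list of size at least $D+k$, then forcing every color used in $N_G(v)$ to appear at least twice makes the colors ``pair up,'' an event whose probability decays exponentially in $d_G(v)$. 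Since each bad event depends only on the colors of one high vertex together with its neighbors, the dependency degrees are polynomial in the degrees, and with degree-tuned weights the local-lemma inequalities should close, producing a proper coloring of the high part in which every high vertex that \emph{can} be witnessed internally already is.

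The completion on the low vertices is where I expect the real obstacle to lie. After the high phase each low vertex $v$ sees a fixed set of colors on its high neighbors; deleting these from $L(v)$ leaves a residual list of size at least $k$ (a low vertex has fewer than $D$ neighbors in total), so within the low subgraph we are again in a bounded-degree, list-slack regime and can invoke \cite{KSXarxiv}. The difficulty is the \emph{coupling of witnesses across the interface}: a high vertex may have been left without an internal witness and must be served by a low neighbor, while that same low neighbor's color is simultaneously constrained by its own properness, by its own need for a unique witness, and by the residual list shrinking to size exactly $k$ when all of its neighbors are high. Guaranteeing a single coloring of the low part that can (i) stay proper, (ii) give every low vertex a unique witness, and (iii) supply the missing unique witnesses for the high vertices, all with only a constant amount of list room, is the crux. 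My expectation is that this interface constraint is exactly what makes a \emph{universal} constant $k$ hard to pin down: one would likely need to build the witness assignment into the random high coloring itself (for instance by reserving, for each high vertex, a random private color to be realized by one designated neighbor) and then argue, via a second local-lemma pass or an augmenting/matching argument, that these reservations can be satisfied consistently. Making that final step work for arbitrary graphs, rather than only for bounded-degree or outerplanar ones, is the heart of the conjecture.
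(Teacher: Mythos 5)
The statement you are addressing is Conjecture~\ref{conj:general}, which the paper leaves open; there is no proof in the paper to compare against, and your text is a research programme rather than a proof. Judged on its own terms it has at least two genuine gaps. The first is in the high phase: the claim that the ``no colour appears exactly once among the high neighbours of $v$'' event has probability decaying exponentially in $d_G(v)$ is false in the regime that is actually the known obstruction. A high vertex $v$ may have $d_G(v)=n$ neighbours all of degree exactly $D$, each drawing uniformly from the \emph{same} list of size $D+k$; for $n\gg D+k$ this is a balls-in-bins experiment with far more balls than bins, and the probability that \emph{some} colour is hit exactly once tends to $0$, so the bad event has probability tending to $1$ and no weighting of the Local Lemma can absorb it. Any correct argument must treat such vertices by an external witness (a designated neighbour reserving a private colour), which is precisely the machinery of Cranston--Liu and Liu--Reed for the $\Delta+O(1)$ problem, and it is not available ``for free'' here because the reserved colour must also survive the properness and witness constraints of the designated neighbour, whose own list may have size only $D+k$.

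The second gap is the one you name yourself: the completion on the low vertices must simultaneously (i) be proper, (ii) give each low vertex a unique witness within a residual list of size $d_{\mathrm{low}}(v)+k$, and (iii) realise the witnesses promised to high vertices, and you offer no mechanism that guarantees all three with a single absolute constant $k$. Note also a quantifier issue in your appeal to \cite{KSXarxiv}: what is needed for the low part is a constant $k_0$ that works for \emph{all} graphs of maximum degree less than $D$, where $D$ itself is chosen as a function of $k$; unless $k_0$ can be taken independent of $D$ (which is essentially a weak form of the conjecture itself), the argument is circular. So the proposal correctly identifies where the difficulty lies but does not close either the high-degree witness problem or the interface problem, and the conjecture remains open.
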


If Conjecture~\ref{conj:general} is true, then it guarantees the existence of an absolute constant $k$ such that every graph $G$ satisfies $\chi_{{\rm pcf}}(G)\leq \Delta(G)+k$, where $\Delta(G)$ is the maximum degree of $G$.

In this paper, we focus on proper conflict-free $({\rm degree}+k)$-choosability of outerplanar graphs.
It is known that the 5-cycle is not proper conflict-free 4-colorable, and hence it is not proper conflict-free $({\rm degree}+2)$-choosable.
Our main result is the following, which shows that the $5$-cycle is the only outerplanar graph that is not proper conflict-free $({\rm degree}+2)$-choosable.

\begin{theorem}\label{thm:outerplanar}
  Every connected outerplanar graph other than the $5$-cycle is proper conflict-free $({\rm degree}+2)$-choosable.
\end{theorem}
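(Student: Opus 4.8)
The plan is to argue by minimality. Suppose the theorem fails, and let $G$ be a connected outerplanar graph different from the $5$-cycle with $|V(G)|$ as small as possible for which some $f$-list assignment $L$ with $f(v)=d_G(v)+2$ admits no proper conflict-free $L$-coloring; deleting superfluous colors, I may assume $|L(v)|=d_G(v)+2$ for every $v$. The first reductions drive the minimum degree up. If $G$ has a vertex $v$ with $d_G(v)=1$ and neighbor $u$, then $|L(v)|=3$; I color $G-v$ by the minimality of $G$ (handling by hand the finitely many configurations in which $G-v$ is the $5$-cycle) and extend. As $v$ has a single neighbor, $\mathcal{U}_{\phi}(v,G)\neq\emptyset$ is automatic, so I only need $\phi(v)\neq\phi(u)$ together with $\phi(v)\neq c$, where $c$ is a color witnessing $\mathcal{U}_{\phi}(u,G-v)\neq\emptyset$; two of the three colors of $L(v)$ avoid both. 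Hence $\delta(G)\geq 2$.

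Next I reduce to the $2$-connected case through an end block. If $G$ is not $2$-connected, fix an end block $B$, attached to the rest of $G$ at the unique cut vertex $w$; since $\delta(G)\geq 2$, the block $B$ is $2$-connected, every vertex of $B$ other than $w$ has the same degree in $G$ as in $B$, and $d_G(w)\geq 3$. If $G$ itself is $2$-connected I set $B=G$. Thus in all cases I work inside a $2$-connected outerplanar graph $B$ whose degree-$2$ vertices (those different from $w$, if any) are degree-$2$ vertices of $G$. If $B$ is a cycle, then either $G=B$ is a cycle, disposed of below, or $B$ is a pendant cycle through $w$ all of whose other vertices have degree $2$ in $G$. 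Otherwise $B$ has a chord, so its weak dual is a tree with at least two leaves; a leaf face is an \emph{ear}, bounded by a chord $xy$ and a path $v_0 v_1\cdots v_s v_{s+1}$ with $v_0=x$, $v_{s+1}=y$, and $v_1,\dots,v_s$ of degree exactly $2$ in $G$. Since $xy\in E(G)$, deleting $v_1,\dots,v_s$ yields a smaller connected outerplanar graph $G'$, which I color by minimality (again treating by hand the bounded number of cases with $G'$ the $5$-cycle).

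It then remains to extend across the deleted part. For the cycle $G=C_n$, a proper conflict-free $L$-coloring is precisely an $L$-coloring in which every two vertices at distance at most two receive distinct colors, that is, a list-coloring of $C_n^2$ from lists of size $4$; since $C_n^2$ is $4$-choosable for every $n\neq 5$, this settles the cycle, and the pendant-cycle and ear extensions are localizations of the same idea. In the ear, with $v_0=x$ and $v_{s+1}=y$, the condition $\mathcal{U}_{\phi}(v_i,G)\neq\emptyset$ for $1\le i\le s$ holds as soon as $\phi(v_{i-1})\neq\phi(v_{i+1})$, so the colors on the path only have to realize proper-plus-distance-two distinctness, which lists of size $4$ allow along a path. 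The genuine constraints sit at the attachment vertices: when $v_1$ (resp.\ $v_s$) is restored I must keep $\mathcal{U}_{\phi}(x,G)\neq\emptyset$ (resp.\ for $y$), and for this I use the dichotomy that $x$ retains a unique color provided $\phi(v_1)$ either avoids the current witness of $\mathcal{U}_{\phi}(x,G')$ or is chosen outside the colors on $N_{G'}(x)$, thereby \emph{creating} a fresh witness at $x$.

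The delicate point, where I expect essentially all the difficulty to concentrate, is the tight ear with $s=1$: a triangle $x v_1 y$ with $x\sim y$. Here $\phi(x)\neq\phi(y)$ comes for free, so $v_1$ is automatically conflict-free, but $\phi(v_1)\in L(v_1)$ with $|L(v_1)|=4$ must avoid $\phi(x)$ and $\phi(y)$ while simultaneously protecting witnesses for both $\mathcal{U}_{\phi}(x,G)$ and $\mathcal{U}_{\phi}(y,G)$ — up to four colors against a list of exactly four. To close this case I exploit coincidences among these forbidden colors (for instance, the witness at $x$ may be $\phi(y)$ itself), fall back on the create-a-new-witness alternative above, and, when neither suffices, recolor $x$ or $y$ locally, using the surplus in their own lists of size $d_G(\cdot)+2$ to liberate a color for $v_1$. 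Carrying out this finite but fiddly analysis exhaustively, while ensuring the induction never needs to invoke the excluded $5$-cycle, is the heart of the argument.
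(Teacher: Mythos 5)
Your overall architecture (induction via end blocks, the cycle case, peeling a face of the weak dual and re-extending) matches the paper's, and several ingredients are sound: the reduction of the cycle case to $4$-choosability of $C_n^2$ is correct and even slicker than the paper's direct argument, and your treatment of an ear at whose root edge one endpoint has small degree works for the same reason as in the paper (a degree-$3$ attachment vertex has \emph{two} uniquely occurring colors in $G'$, so the one new neighbor can destroy at most one of them). But there is a genuine gap exactly where you say the difficulty concentrates, and the fix you sketch does not close it.

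The problem is that you take an \emph{arbitrary} leaf face of the weak dual. Both endpoints $x,y$ of its root edge may have degree at least $4$ in $G$, and then each endpoint genuinely imposes a forbidden color on the adjacent interior vertex (the witness of $\mathcal{U}_\phi(\cdot,G')$ may be unique and may be destroyed). For the triangle ear this gives up to four distinct forbidden colors $\{\phi(x),\phi(y),c_x,c_y\}$ against $|L(v_1)|=4$, and the same count kills the ear of length $3$ (each of $v_1,v_2$ then has four forbidden colors). This is not a degenerate corner: take the outer $8$-cycle $v_1\cdots v_8$ with chords $v_1v_3,v_3v_5,v_5v_7,v_7v_1$; every leaf face is a triangle whose root edge has both endpoints of degree $4$, so your peeling step has no valid starting point. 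Your fallback --- ``recolor $x$ or $y$ locally'' --- is not a proof: recoloring an attachment vertex must preserve properness and conflict-freeness at all of its other neighbors in $G'$, and those constraints propagate; nothing in the proposal bounds that cascade. The paper's way around this is a structural unavoidability lemma (Lemma~\ref{lem:outerplanar unavoidable}): a $2$-connected non-cycle outerplanar graph always contains either an ear with a degree-$3$ attachment vertex (your easy case) or an \emph{ear-chain} --- a maximal run of ears whose root edges form a path $v_1\cdots v_s$ of internal degree-$4$ vertices closed off by an edge $v_1v_s$ of $G$. In the latter case one deletes the entire chain at once, so only $v_1$ and $v_s$ interface with $G'$, and the internal $v_i$ carry lists of size $6$, which provides the slack that a single short ear lacks. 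Without this (or an equivalent strengthening of the induction hypothesis), your induction cannot get started on configurations like the one above, so the argument as proposed is incomplete.
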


The number of colors $({\rm degree}+2)$ in Theorem~\ref{thm:outerplanar} cannot be reduced to $({\rm degree}+1)$.
Indeed, we can construct connected outerplanar graphs that are not proper conflict-free $(\text{degree}+1)$-choosable in the following manner.

\begin{proposition}\label{prop:degree+1}
    For every connected outerplanar graph $H$, there is a connected outerplanar graph $G$ such that $H$ is an induced subgraph of $G$ and $G$ is not proper conflict-free $({\rm degree}+1)$-choosable.
\end{proposition}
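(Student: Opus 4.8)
The plan is to exhibit, for a given connected outerplanar graph $H$, one bad list assignment on a graph $G\supseteq H$ for which the conflict-free requirement fails at a single distinguished vertex no matter how colours are chosen. The engine is a \emph{pinning gadget} I would isolate first. Attach to a vertex $j$ a $4$-cycle $j\,a\,b\,d\,j$ whose three new vertices receive the list $\{1,2,3\}$. Since $a,b,d$ have degree $2$, the conditions $\mathcal{U}_\phi(a,G),\mathcal{U}_\phi(b,G),\mathcal{U}_\phi(d,G)\neq\emptyset$ read $\phi(j)\neq\phi(b)$, $\phi(a)\neq\phi(d)$, $\phi(b)\neq\phi(j)$; together with properness around the $4$-cycle they force $j,a,b,d$ to be pairwise distinct, so $\{\phi(a),\phi(b),\phi(d)\}=\{1,2,3\}$ and $\phi(j)\notin\{1,2,3\}$. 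The decisive feature, which I would record as a small claim, is that this forcing never invokes the conflict-free condition at $j$ itself, so it persists verbatim however $j$ is later joined to the rest of $G$. Hence giving $j$ the list $\{1,2,3,4\}$ pins $\phi(j)=4$, and the list $\{1,2,3,5\}$ pins $\phi(j)=5$.

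Next I would build the hub. Take a vertex $c$ and $2k$ pinning gadgets with junctions $j_1,\dots,j_{2k}$, each joined to $c$ by an edge; give $j_1,\dots,j_k$ the list $\{1,2,3,4\}$ and $j_{k+1},\dots,j_{2k}$ the list $\{1,2,3,5\}$, so the first $k$ junctions are pinned to $4$ and the last $k$ to $5$. Here $d_G(j_i)=3$ makes these lists the right size, while $d_G(c)=2k$ permits $c$ any list of size $2k+1$. Now $N_G(c)=\{j_1,\dots,j_{2k}\}$ is coloured by $k$ copies of $4$ and $k$ copies of $5$, which for $k\geq 2$ contains no colour exactly once; thus $\mathcal{U}_\phi(c,G)=\emptyset$ and no coloring is conflict-free. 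This already yields a connected outerplanar graph (a central vertex with $2k$ pendant $4$-cycles, a cactus, hence outerplanar) that is not proper conflict-free $(\mathrm{degree}+1)$-choosable, settling the case $H=K_1$.

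To embed an arbitrary $H$ I would attach it by a single bridge from a vertex $h_0\in H$ to one junction, say $j_1$. This keeps $G$ connected and outerplanar (a bridge between two outerplanar pieces) and keeps $H$ induced, since no edge inside $V(H)$ is added; the only effect is that $d_G(j_1)$ rises to $4$, forcing $|L(j_1)|=5$, for which I set $L(j_1)=\{1,2,3,4,5\}$. This is the main obstacle: the extra list colour available at an attachment vertex is exactly the slack that normally lets a coloring escape a rigid gadget. The construction defeats it because the pinning still forces $\phi(j_1)\notin\{1,2,3\}$, so $\phi(j_1)\in\{4,5\}$ — the new slot only lets $j_1$ flip between the two hub-colours, never reach a genuinely new colour. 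Thus $N_G(c)$ is still coloured inside $\{4,5\}$, with either $k$ copies of each, or $k-1$ copies of $4$ and $k+1$ copies of $5$. Taking $k\geq 3$ guarantees that in both cases each of $4$ and $5$ occurs at least twice, so $\mathcal{U}_\phi(c,G)=\emptyset$ once more and no proper conflict-free $L$-coloring exists.

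The routine part of the write-up is checking that $G$ is connected, outerplanar, and contains $H$ as an induced subgraph, together with the list-size bookkeeping, all immediate from the cactus-like block structure. Beyond the pinning claim, the single delicate point is the robustness just described: that a degree increase of one at the attachment vertex cannot manufacture a colour appearing exactly once in $N_G(c)$. The key that makes this go through is phrasing the pinning as a colour-\emph{range} statement ($\phi(j)\notin\{1,2,3\}$) rather than a single-value statement ($\phi(j)=4$), and I would organise the argument around that distinction.
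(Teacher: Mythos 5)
Your proposal is correct. The core gadget is the same as the paper's: a pendant $4$-cycle whose three degree-$2$ vertices share a common $3$-element list, which in any proper conflict-free coloring forces the root's colour outside that $3$-set, and does so without ever invoking the conflict-free condition at the root. Where you diverge is in how the gadgets are combined and where the contradiction lands. The paper identifies $d+1$ such cycles directly at a vertex $v_0\in V(H)$ of degree $d$ and gives the gadgets pairwise disjoint triples $\{3i-2,3i-1,3i\}$; since the merged vertex $v$ then has degree $3d+2$ and list $\{1,\dots,3d+3\}$, every colour in $L(v)$ is excluded by some gadget and $v$ cannot be coloured at all. You instead leave $H$ untouched, build a separate hub $c$ with $2k$ gadget junctions pinned to just the two colours $4$ and $5$, attach $H$ by a bridge to one junction, and derive the contradiction from $\mathcal{U}_{\phi}(c,G)=\emptyset$, since each of $4$ and $5$ occurs at least twice in $N_G(c)$ once $k\geq 3$ absorbs the degree bump at the attachment junction. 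Both arguments are sound; the paper's is slightly more economical (no hub vertex, no case split over how the attachment junction resolves between $4$ and $5$), while yours attaches $H$ by a bridge rather than modifying it at a vertex, and your explicit framing of the pinning as a colour-range statement robust under degree increases at the root is precisely the point the paper exploits implicitly when the identified vertex $v$ inherits the extra neighbours from $H$.
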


\begin{proof}
    Let $H$ be a connected outerplanar graph and let $v_0$ be a vertex of $H$ of degree $d$.
    For each $i\in [d+1]$, let $C^{(i)}=v_0^iv_1^iv_2^iv_3^iv_0^i$ be a cycle of length $4$.
    Let $G$ be a graph obtained from $H$ and $C^{(1)}, C^{(2)}, \dots , C^{(d+1)}$ by identifying $v_0$ and $v_0^1, v_0^2, \dots , v_0^{d+1}$ into a vertex $v$.
    Obviously, $G$ is a connected outerplanar graph that has $H$ as an induced subgraph.
    
    We define a list assignment $L$ of $G$ as follows:
    \begin{itemize}
        \item for each $i\in \{1,2,\dots , d+1\}$, let $L(v_1^i)=L(v_2^i)=L(v_3^i)=\{3i-2, 3i-1, 3i\}$,
        \item $L(v)=\{1,2,\dots , 3d+3\}$, and
        \item for each vertex $u\in V(H)\setminus \{v\}$, let $L(u)=\{1,2,\dots , d_G(u)+1\}$.
    \end{itemize}
    Since $d_G(v)=d_H(v_0)+2(d+1)=d+2(d+1)=3d+2$, $L$ is a list assignment of $G$ such that $|L(u)|\geq d_G(u)+1$ for every vertex $u$ of $G$.
    We show that $G$ is not proper conflict-free $L$-colorable.
    Suppose that $G$ admits a proper conflict-free $L$-coloring $\phi$.
    For each $i\in [d+1]$, since $\mathcal{U}_{\phi}(v_2^i,G)\neq \emptyset$, we have $\{\phi(v_1^i),\phi(v_2^i),\phi(v_3^i)\}=\{3i-2,3i-1,3i\}$.
    In addition, since $\phi(v_j^i)\neq \phi(v)$ and $\mathcal{U}_{\phi}(v_j^i,G)\neq \emptyset$ for $j=1,3$, we have $\phi(v)\notin \{\phi(v_1^i),\phi(v_2^i),\phi(v_3^i)\}=\{3i-2,3i-1,3i\}$.
    This implies that $\phi(v)\notin \bigcup_{i=1}^{d+1}\{3i-2,3i-1,3i\}=\{1,2,\dots , 3d+3\}=L(v)$, a contradiction.
\end{proof}

While the scope of Theorem~\ref{thm:outerplanar} is outerplanar graphs, as we mentioned in our previous paper \cite{KSXarxiv}, we know no other connected graph other than the $5$-cycle that is not proper conflict-free $({\rm degree}+2)$-choosable.
This leads us to the following stronger conjecture originally stated in \cite{KSXarxiv}.

\begin{conjecture}\label{conj:general degree+2}
    Every connected graph other than the $5$-cycle is proper conflict-free $({\rm degree}+2)$-choosable.
\end{conjecture}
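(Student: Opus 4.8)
The plan is to attack Conjecture~\ref{conj:general degree+2} by induction on $|V(G)|$, taking a minimal counterexample $G$ together with a list assignment $L$ satisfying $|L(v)|\ge d_G(v)+2$ that admits no proper conflict-free $L$-coloring. First I would clear away the base cases: all connected graphs on few vertices, and $C_5$ itself, are settled by a finite check, so a minimal counterexample has many vertices. Crucially, by Theorem~\ref{thm:outerplanar} every connected outerplanar graph other than $C_5$ is already proper conflict-free $(\mathrm{degree}+2)$-choosable, so a minimal counterexample must be \emph{non-outerplanar}; hence it contains a subdivision of $K_4$ or of $K_{2,3}$. This immediately supplies a vertex of degree at least $3$ and a nontrivial $2$-connected structure, which is exactly the slack these arguments feed on. The proof then splits into two regimes: a reduction to $2$-connected graphs through the block-cut-vertex tree, and the resolution of the $2$-connected core.

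For the block reduction, suppose $G$ has a cut vertex $x$ splitting it as $G=G_1\cup G_2$ with $G_1\cap G_2=\{x\}$. Since $d_G(x)=d_{G_1}(x)+d_{G_2}(x)$, the list $L(x)$ is far larger than what either side needs alone, so the idea is to color one side, fix $\phi(x)$, and extend across $x$ into the other side by induction. (If a side happens to be a bare $C_5$, the augmented list at $x$ from the other side means it is no longer the tight failing instance, and this case must be checked separately.) The delicate point is not properness but the conflict-free \emph{witness} for $x$ and for its neighbors: a color appearing uniquely among $N_{G_1}(x)$ may be duplicated once $N_{G_2}(x)$ is colored. To control this I would strengthen the induction hypothesis to a precoloring-extension form, asserting colorability for every prescribed color at a designated vertex while reserving the freedom to protect one witness color on $N(x)$. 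Because the neighbors of $x$ carry lists of size at least $d+2$, forbidding each of them a single extra color is cheap; the genuine work is formalizing a strengthening that survives the induction without re-creating $C_5$-type obstructions.

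For the $2$-connected core I would separate cycles from everything else. Cycles $C_n$ with $n\neq 5$ are dispatched directly: with four admissible colors per vertex, proper conflict-free $L$-colorability becomes an explicit cyclic constraint handled by cases according to $n\bmod{\,}$ small values. For a $2$-connected graph that is not a cycle, there is a vertex of degree at least $3$, and this is the engine of a greedy or kernel-type argument: I would seek an ordering $v_1,\dots,v_n$ (via an ear decomposition or a non-separating elimination order) such that when each vertex is colored it has enough free colors both to be proper and to serve as, or to manufacture, a conflict-free witness for an earlier vertex. The two extra colors are meant to pay exactly these two debts, one for properness against already-colored neighbors and one to guarantee a uniquely appearing color in each closed neighborhood.

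The step I expect to be the true obstacle is securing the conflict-free witnesses \emph{globally} in the $2$-connected non-cycle regime. Properness is a local, monotone, negative constraint that ordinary greedy coloring respects, whereas \emph{``some neighbor is colored uniquely''} is a positive, non-monotone condition: coloring a later vertex can annihilate a witness already guaranteed for an earlier one. The crux is therefore to couple the ordering with a witness-assignment in which each vertex's witness is locked in by a neighbor colored afterwards and never reused, or, more likely, to replace the single clean ordering by a reducible-configuration analysis driven by discharging that forces every vertex to retain at least one uniquely colored neighbor. Pushing such a scheme through at the tight list sizes $d_G(v)+2$, and proving that the unique irreducible obstruction is precisely $C_5$, is where I anticipate the argument will demand essentially new ideas beyond the outerplanar case.
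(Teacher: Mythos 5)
First, a point of order: the statement you are proving is stated in the paper as Conjecture~\ref{conj:general degree+2}, not a theorem. The paper proves only the outerplanar instance (Theorem~\ref{thm:outerplanar}); the general statement is open, so there is no proof in the paper to compare yours against. Your text is consistent with this reality, since it is a research plan rather than a proof: you yourself write that the $2$-connected non-cycle regime ``will demand essentially new ideas,'' which is precisely an admission that the central step is missing. Concretely, the plan has two genuine gaps. (i) The block reduction rests on a ``precoloring-extension form'' of the induction hypothesis, ``asserting colorability for every prescribed color at a designated vertex,'' which you never formulate, let alone prove. As stated it is far too strong: prescribing a color at $x$ is equivalent to giving $x$ a list of size $1$, which is drastically below the $d_G(x)+2$ regime in which the statement lives, and Proposition~\ref{prop:c5} already shows how fragile the conflict-free condition is at tight list sizes. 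Note that the paper's outerplanar proof deliberately avoids precoloring: it deletes an ear or ear-chain and exploits the resulting list slack at the attachment vertices ($|L(x)|\geq d_{G'}(x)+3$ or $d_{G'}(x)+4$), together with the careful hypothesis of Lemma~\ref{lem:outerplanar ear} (either $\mathcal{U}_{\phi}(u_1,G')\neq\emptyset$ or $N_{G'}(u_1)$ monochromatic) to survive exactly the witness-destruction problem you identify. Your strengthened hypothesis would have to be weakened to something of this flavor, and finding the right formulation is itself nontrivial work that the proposal does not do.

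(ii) The $2$-connected non-outerplanar core is where the entire difficulty of the conjecture sits, and there the proposal offers only aspirations: ``a greedy or kernel-type argument,'' ``an ordering via an ear decomposition,'' or ``a reducible-configuration analysis driven by discharging,'' with no ordering constructed, no configurations exhibited, no discharging rules given, and no analogue of the paper's structural engine (Lemma~\ref{lem:outerplanar unavoidable}, which supplies good ears and ear-chains and is special to outerplanar graphs). Your diagnosis of why the problem is hard --- properness is monotone under greedy extension while the uniquely-colored-neighbor condition is not, so later choices can annihilate earlier witnesses --- is accurate and matches the pain points the paper's case analysis works around in the outerplanar setting, but a correct diagnosis is not a proof. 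As it stands, the proposal reduces the conjecture to itself on $2$-connected graphs containing a $K_4$- or $K_{2,3}$-subdivision, which is essentially the full open problem (even the weaker Conjecture~\ref{conj:general}, asking merely for some absolute constant $k$, is open). So the verdict is: not a proof, with the two gaps above being the substantive ones.
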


The paper is organized as follows. 
In Section~\ref{sec:lemma}, we show some propositions and lemmas used in our proof of Theorem~\ref{thm:outerplanar}.
In Section~\ref{sec:proof}, we give a proof of Theorem~\ref{thm:outerplanar}.
In Section~\ref{sec:conclusion}, we conclude the paper and propose several lines for future work.

\section{Auxiliary results}\label{sec:lemma}

\begin{proposition}\label{prop:cycle}
  Every cycle of length $\ell\neq 5$ is proper conflict-free $({\rm degree}+2)$-choosable.
\end{proposition}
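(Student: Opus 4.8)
The plan is to translate the proper conflict-free condition on a cycle into a purely proper-coloring condition on the square of the cycle, and then invoke the classical degree-choosability theorem. Write $C_\ell=v_0v_1\cdots v_{\ell-1}v_0$. Since each vertex of $C_\ell$ has exactly two neighbours $v_{i-1},v_{i+1}$, a color lies in $\mathcal{U}_\phi(v_i,C_\ell)$ precisely when these two neighbours receive distinct colors: if they agree, the unique color appearing in the neighbourhood is used twice and $\mathcal{U}_\phi(v_i,C_\ell)=\emptyset$, whereas if they differ, both of their colors witness the conflict-free condition. Hence a coloring $\phi$ is a proper conflict-free coloring of $C_\ell$ if and only if any two vertices at distance at most $2$ receive distinct colors, i.e.\ $\phi$ is a proper coloring of the square $C_\ell^2$. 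Because every vertex of $C_\ell$ has degree $2$, the hypothesis $|L(v)|\ge d_{C_\ell}(v)+2$ becomes $|L(v)|\ge 4$, so the proposition is equivalent to showing that $C_\ell^2$ is $4$-choosable for every $\ell\neq 5$.

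First I would dispose of the two small cases. For $\ell=3$ we have $C_3^2=K_3$ and for $\ell=4$ we have $C_4^2=K_4$; since $K_n$ is $n$-choosable and the lists have size at least $4\ge n$ in both cases, an $L$-coloring exists trivially.

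For the main range $\ell\ge 6$, I would observe that $C_\ell^2$ is $2$-connected (it contains the spanning cycle $C_\ell$, so it is a single block) and $4$-regular, and then check that it is neither a complete graph nor an odd cycle. It is not complete because a complete graph on $\ell$ vertices is $(\ell-1)$-regular, while $C_\ell^2$ is $4$-regular with $\ell-1>4$, so e.g.\ antipodal vertices are non-adjacent; it is not an odd cycle because its vertices have degree $4$, not $2$. Consequently $C_\ell^2$ is a single block that is neither complete nor an odd cycle, hence it is not a Gallai tree. By the Erd\H{o}s--Rubin--Taylor degree-choosability theorem---a connected graph is $L$-colorable for every list assignment with $|L(v)|\ge d(v)$ unless it is a Gallai tree---$C_\ell^2$ is degree-choosable. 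As every vertex has degree exactly $4$ and $|L(v)|\ge 4$, this yields the required proper coloring of $C_\ell^2$, equivalently a proper conflict-free $L$-coloring of $C_\ell$.

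The step I expect to be the main obstacle is precisely the one that the Gallai-tree machinery is designed to overcome, namely the cyclic closure. A naive greedy coloring of $v_0,v_1,\dots,v_{\ell-1}$ forbids at most two colors per interior vertex, comfortably within a list of size $4$; but the final vertex $v_{\ell-1}$ is simultaneously constrained, at distance at most $2$, by $v_{\ell-2},v_{\ell-3},v_0$ and $v_1$, and if these four colors are distinct they can exhaust its list. Thus the real content is not the easy interior bound but the guarantee that the cycle can be closed, and invoking the degree-choosability characterization resolves exactly this difficulty. (If a self-contained argument were preferred, one could instead run an Erd\H{o}s--Rubin--Taylor-style induction that deletes a suitable vertex lying on an even subcycle of $C_\ell^2$ and extends the coloring back, but citing the theorem is cleaner.)
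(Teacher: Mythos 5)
Your proof is correct, but it takes a genuinely different route from the paper's. You observe that on a cycle the conflict-free condition at $v_i$ is exactly the requirement $\phi(v_{i-1})\neq\phi(v_{i+1})$, so a proper conflict-free coloring of $C_\ell$ is the same as a proper coloring of the square $C_\ell^2$ with lists of size $4=d_{C_\ell^2}(v)$; you then invoke the Borodin/Erd\H{o}s--Rubin--Taylor characterization of degree-choosability, checking that for $\ell\geq 6$ the graph $C_\ell^2$ is a single $2$-connected block that is $4$-regular but neither complete nor an odd cycle, hence not a Gallai tree. All of these checks are valid (and the small cases $C_3^2=K_3$, $C_4^2=K_4$ are handled correctly), so the argument goes through. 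The paper instead argues directly: if all lists coincide it cites the known fact that $C_\ell$ is proper conflict-free $4$-colorable for $\ell\neq 5$, and otherwise it picks a color $\alpha\in L(v_0)\setminus L(v_{\ell-1})$ and colors greedily around the cycle, using the absent color to close it up. Your reduction is cleaner and more conceptual, avoids the case split and the citation of the $4$-colorability result, and makes the exceptional role of $\ell=5$ transparent ($C_5^2=K_5$ is a Gallai tree). What the paper's hands-on construction buys in exchange is that the same greedy argument, run on a $5$-cycle, immediately yields Proposition~\ref{prop:c5} (the $5$-cycle is $L$-colorable unless all five lists are equal), which is needed repeatedly in the main induction; to recover that statement from your approach you would need the stronger form of the Erd\H{o}s--Rubin--Taylor theorem that characterizes the non-colorable list assignments on Gallai trees, not just the degree-choosability dichotomy.
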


\begin{proof}
    The statement is trivial when $\ell\leq 4$. Assume that $\ell\geq 6$.
    Let $C=v_0v_1\cdots v_{\ell-1}v_0$ be a cycle of length $\ell\geq 6$. 
    Let $L$ be a list assignment of $C$ such that $|L(v_i)|=d_C(v_i)+2=4$ for every vertex $i\in\{0,1,\dots ,\ell-1\}$.
    If $L(v_0)=L(v_1)=\cdots =L(v_{\ell-1})$, then $C$ is proper conflict-free $L$-colorable since the cycle of length $\ell\neq 5$ is proper conflict-free $4$-colorable, as shown in Caro, Petru\v{s}evski, and \v{S}krekovski~\cite{CPS2023}.
    Thus, without loss of generality, we assume that $L(v_0)\neq L(v_{\ell-1})$.
    
    Let $\alpha$ be a color in $L(v_0)\setminus L(v_{\ell-1})$.
    We set $\phi(v_0)=\alpha$ and set $\phi(v_1)\in L(v_1)\setminus\{\alpha\}$.
    For $i=2,3,\dots , \ell-3$, we set $\phi(v_i)\in L(v_i)\setminus\{\phi(v_{i-2}),\phi(v_{i-1})\}$.
    Let $\phi(v_{\ell-2})\in L(v_{\ell-2})\setminus\{\phi(v_{\ell-4}),\phi(v_{\ell-3}),\alpha\}$ and let $\phi(v_{\ell-1})\in L(v_{\ell-1})\setminus\{\phi(v_{\ell-3}),\phi(v_{\ell-2}), \phi(v_1)\}$.
    Since $\alpha\notin L(v_{\ell-1})$, we have $\phi(v_{\ell-1})\neq \phi(v_0)$.
    By the choice of colors, it is easy to verify that $\phi$ is a proper conflict-free $L$-coloring of $C$.
\end{proof}

Note that the latter argument of the proof of Proposition~\ref{prop:cycle} works even if $\ell=5$.
This implies the following claim.

\begin{proposition}\label{prop:c5}
  Let $C=v_0v_1v_2v_3v_4v_0$ be a $5$-cycle.
  Suppose that $L$ is a $4$-list assignment of $C$.
  Then $C$ is not proper conflict-free $L$-colorable if and only if $L(v_0)=L(v_1)=L(v_2)=L(v_3)=L(v_4)$ and $|L(v_0)|=4$.
\end{proposition}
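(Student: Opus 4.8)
The plan is to reduce the statement to an elementary fact about systems of distinct representatives. The key structural observation is that $C$ has diameter $2$, so any two distinct vertices among $v_0,\dots,v_4$ are at distance $1$ or $2$. Properness forbids equal colors on the five edges (the distance-$1$ pairs), while the conflict-free requirement is purely local: for each $i$ the two neighbors of $v_i$ are $v_{i-1}$ and $v_{i+1}$, so $\mathcal{U}_{\phi}(v_i,C)\neq\emptyset$ holds if and only if $\phi(v_{i-1})\neq\phi(v_{i+1})$, which forbids equal colors on the five distance-$2$ pairs. Since these ten pairs exhaust all pairs of distinct vertices of $C$, a map $\phi$ with $\phi(v_i)\in L(v_i)$ is a proper conflict-free $L$-coloring of $C$ if and only if $\phi(v_0),\dots,\phi(v_4)$ are pairwise distinct; equivalently, if and only if the lists $L(v_0),\dots,L(v_4)$ admit a system of distinct representatives.

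With this reduction in hand the ``if'' direction is immediate: if $L(v_0)=\cdots=L(v_4)=A$ with $|A|=4$, then selecting five pairwise distinct colors from the $4$-element set $A$ is impossible, so no proper conflict-free $L$-coloring exists. (This recovers the fact noted in the introduction that $C$ is not proper conflict-free $4$-colorable.)

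For the converse I would invoke Hall's theorem. A system of distinct representatives fails to exist precisely when some $S\subseteq\{v_0,\dots,v_4\}$ violates $|\bigcup_{v\in S}L(v)|\geq|S|$. Because every list has size at least $4$, for any $S$ with $|S|\leq 4$ the union already contains a single list of size at least $4\geq|S|$, so Hall's condition holds automatically; the only possible failure is at $S=\{v_0,\dots,v_4\}$, i.e. $|\bigcup_{i}L(v_i)|\leq 4$. Combined with $|L(v_i)|\geq 4$ for each $i$, this forces $|\bigcup_i L(v_i)|=4$, whence each list is a size-$\geq 4$ subset of a $4$-element set and therefore equals that set: all lists coincide and have size exactly $4$. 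Thus non-colorability implies the stated condition. As an alternative to Hall, one can argue constructively, as the remark preceding the statement suggests: if the lists are not all identical then some edge $v_jv_{j+1}$ has differing lists, and after a dihedral relabeling one arranges $L(v_0)\setminus L(v_4)\neq\emptyset$, at which point the greedy coloring from the latter part of the proof of Proposition~\ref{prop:cycle} applies verbatim for $\ell=5$.

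The mathematical content is light, so the ``obstacle'' is really a matter of choosing the right framing: once one sees that a proper conflict-free coloring of $C$ is exactly a rainbow selection, the result is a one-line consequence of Hall's theorem, with the size-$4$ lists making the failure case unique. If instead one follows the constructive route, the only delicate point is passing from $L(v_0)\neq L(v_4)$ to the usable condition $L(v_0)\setminus L(v_4)\neq\emptyset$, since an inequality of lists need not by itself supply a color of $v_0$ missing from $L(v_4)$; orienting the differing edge correctly, which the transitivity of the dihedral action on arcs makes precise, resolves this.
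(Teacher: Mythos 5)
Your proof is correct, and it takes a genuinely different route from the paper's. The paper disposes of this proposition in one sentence: it observes that the greedy colouring in the latter part of the proof of Proposition~\ref{prop:cycle} (pick $\alpha\in L(v_0)\setminus L(v_4)$ and colour around the cycle) still works when $\ell=5$, and it imports the known fact that $C_5$ is not proper conflict-free $4$-colourable for the other direction. Your argument instead rests on the structural observation that on $C_5$ the properness constraints and the conflict-free constraints together cover all $\binom{5}{2}$ pairs of vertices, so a proper conflict-free colouring is exactly a rainbow colouring; the proposition then reduces to the existence of a system of distinct representatives, and Hall's condition with lists of size at least $4$ can only fail at the full vertex set, forcing all lists to coincide and have size exactly $4$. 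This buys you both directions in one stroke (the non-$4$-colourability of $C_5$ falls out rather than being cited), and it transparently handles the case of lists of size greater than $4$, which the paper's greedy route only handles after the orientation issue you correctly flag: $L(v_0)\neq L(v_4)$ need not give $L(v_0)\setminus L(v_4)\neq\emptyset$ when list sizes differ, and one must reflect the cycle to put the surplus colour at $v_0$. The trade-off is that your rainbow reduction is special to $C_5$ (it uses diameter $2$ and degree $2$ simultaneously), whereas the paper's greedy template is the one reused throughout Section~3; but as a self-contained proof of this proposition, yours is complete and arguably cleaner.
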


In our proof of Theorem~\ref{thm:outerplanar}, we use an unavoidable set of $2$-connected outerplanar graphs.
Let $G$ be a $2$-connected outerplanar graph which is not a cycle.
Let us define:

\begin{itemize}
  \item An \emph{ear} $H$ of $G$ is a cycle $u_1u_2\cdots u_ru_1$ $(r\geq 3)$ such that $d_G(u_i) =2$ for $i\in \{2,3,\dots , r-1\}$. (Figure~\ref{fig:ear}) 
  The edge $u_1u_r$ is the \emph{root edge} of $H$. 
  We say $H$ is \emph{good for a vertex $x$ of $G$} if $d_G(u_i)=3$ and $x\notin V(H)\setminus\{u_{r+1-i}\}$ for some $i\in\{1,r\}$.
  \item An \emph{ear-chain} $H$ is a subgraph of $G$ induced by $\bigcup_{i=1}^{s-1}V(H_i)$ $(s\geq 3)$ where each $H_i$ is an ear of $G$ such that the root edges of the ears form a path $v_1v_2 \cdots v_s$
  (the root edge of $H_i$ is $v_iv_{i+1}$) together with the edge $v_1v_s$ of $G$ and $d_G(v_i) = 4$ for $i\in\{2,3,\dots, s-1\}$. (Figure~\ref{fig:ear chain})
  The edge $v_1v_s$ is the \emph{root edge} of $H$.
  We say $H$ is \emph{good for a vertex $x$ of $G$} if $x\notin V(H)\setminus\{v_1,v_s\}$.
\end{itemize}

\begin{figure}
  \centering
  \begin{minipage}{0.45\columnwidth}
    \centering
    \begin{tikzpicture}[roundnode/.style={circle, draw=black,fill=black, minimum size=1.5mm, inner sep=0pt},
	smallnode/.style={circle, draw=black,fill=black, minimum size=1.2mm, inner sep=0pt},]
			
		\draw (2,0) arc [start angle=0, end angle=180, radius=2];

		\node [roundnode] (u1) at (10:2){};
		\node [roundnode] (u2) at (30:2){};
		\node [roundnode] (u3) at (50:2){};
		\node [roundnode] (u4) at (70:2){};
		\node [roundnode] (u5) at (90:2){};
		\node [roundnode] (u6) at (110:2){};
		\node [roundnode] (u7) at (130:2){};
		\node [roundnode] (u8) at (150:2){};
		\node [roundnode] (u9) at (170:2){};

		\draw (u1)--(u9);

		\node at (10:2.3){$u_1$};
		\node at (30:2.3){$u_2$};
		\node at (170:2.3){$u_r$};
    \end{tikzpicture}
    \subcaption{An ear}
    \label{fig:ear}
  \end{minipage}
  \begin{minipage}{0.45\columnwidth}
    \centering
    \begin{tikzpicture}[roundnode/.style={circle, draw=black,fill=black, minimum size=1.5mm, inner sep=0pt},
		smallnode/.style={circle, draw=black,fill=black, minimum size=1.2mm, inner sep=0pt},]
			
		\draw (2,0) arc [start angle=0, end angle=180, radius=2];

		\node [roundnode] (v1) at (10:2){};
		\node [roundnode] (v2) at (50:2){};
		\node [roundnode] (v3) at (110:2){};
		\node [roundnode] (v4) at (170:2){};

		\node [smallnode] at (30:2){};
		\node [smallnode] at (62:2){};
		\node [smallnode] at (74:2){};
		\node [smallnode] at (86:2){};
		\node [smallnode] at (98:2){};
		\node [smallnode] at (130:2){};
		\node [smallnode] at (150:2){};

		\draw (v1)--(v2)--(v3)--(v4)--(v1);

		\node at (10:2.3){$v_1$};
		\node at (50:2.3){$v_2$};
		\node at (170:2.3){$v_s$};
        \node at (30:2.5){$H_1$};
        \node at (140:2.5){$H_{s-1}$};
    \end{tikzpicture}
    \subcaption{An ear chain}
    \label{fig:ear chain}
    \end{minipage}
	\caption{Unavoidable structures of 2-connected outerplanar graphs.}
	\label{fig:unavoidable}
\end{figure}
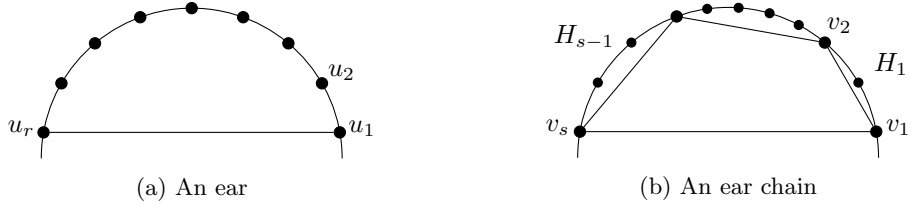

Then, the following holds.

\begin{lemma}\label{lem:outerplanar unavoidable}
  Let $G$ be a $2$-connected outerplanar graph of order at least $4$, and let $x$ be a vertex of $G$.
  If $G$ is not a cycle, then $G$ contains an ear or an ear-chain that is good for $x$.
%  In particular, every $2$-connected outerplanar has either an ear such that at least one end vertex of the root edge has degree $3$ in $G$ or an ear-chain.
\end{lemma}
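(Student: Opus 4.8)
The plan is to read off the required structures from the \emph{weak dual tree} of $G$. Since $G$ is $2$-connected and outerplanar, the boundary of its outer face is a Hamiltonian cycle $C$, and the chords of $C$ partition the interior into faces; the tree $T$ whose nodes are the internal faces and whose edges join two faces sharing a chord is the weak dual. As $G$ is not a cycle it has a chord, so $T$ has at least two nodes and hence at least two leaves. The first observation I would record is that a leaf of $T$ is exactly an ear: a leaf face $F$ has a single chord $ab$ on its boundary, the rest of its boundary being a subpath of $C$ whose internal vertices lie on no chord and therefore have degree $2$ in $G$; thus $F$ together with $ab$ is an ear with root edge $ab$. So ears are abundant, and the whole task is to locate an ear with a degree-$3$ root endpoint, or an ear-chain, that avoids $x$ in the prescribed way.

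The main reduction is to take a leaf $F$ of $T$ that is \emph{farthest} in $T$ from the set $\pi_x$ of faces incident to $x$; here $\pi_x$ is a subpath of $T$, because the faces around the single vertex $x$ occur consecutively. Let $F'$ be the neighbor of $F$ on the path toward $\pi_x$. By maximality of the distance, every neighbor of $F'$ other than the one continuing toward $\pi_x$ is itself a leaf; equivalently, all but one of the chords on the boundary of $F'$ are root edges of leaf-ears. I then split on the boundary of $F'$. If every boundary edge of $F'$ is a chord, the leaf-ears hanging on $F'$, read cyclically, form an ear-chain whose root edge is the single chord pointing toward $\pi_x$ (so $s\ge 3$, since a face has at least three edges); each vertex shared by two consecutive ears is incident to exactly three faces — the two ears and $F'$ — and hence has degree $4$, matching the definition. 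If instead some edge of $F'$ lies on $C$, then reading the boundary cyclically one finds a leaf-ear whose root edge shares an endpoint with an edge of $C$; at that vertex the only incident chord is the root edge, so the vertex has degree $3$, giving a candidate good ear.

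It remains to force the located structure to avoid $x$. When $F$ is at distance at least $2$ from $\pi_x$, the face $F'$ and all leaf-ears on it lie outside $\pi_x$, so none of their vertices is $x$; the degree-$3$ ear is then good for $x$, and in the ear-chain case $x$ is not among the $v_i$ at all. The delicate situation — which I expect to be the main obstacle — is when every leaf of $T$ is within distance $1$ of $\pi_x$, so the structure produced necessarily meets $x$; here the clean ``parent with all leaf children'' picture degenerates because $F'$ now sits on the spine $\pi_x$ and may carry two spine chords rather than one.

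To handle this remaining case I would argue directly along the spine $\pi_x=(P_1,\dots,P_{d-1})$ with $d=d_G(x)$. Any leaf-ear whose root edge is \emph{not} incident to $x$ has vertex set missing $x$, so it is good for $x$ as soon as it has a degree-$3$ root endpoint; thus the problem reduces to the extreme case in which every leaf-ear is rooted at a chord through $x$. In that case each leaf must itself be a spine face, forcing $T=\pi_x$ to be a path and $x$ to lie on every face, so $G$ is fan-like; then the end face $P_1$ is a leaf-ear rooted at a chord $xy_1$ with $d_G(y_1)=3$, and since $x$ is permitted to be the opposite (non-active) root endpoint, $P_1$ is good for $x$. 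More generally I would exploit that ``good for $x$'' allows $x$ to be a non-active root endpoint of an ear and either end $v_1,v_s$ of an ear-chain: when $x$ lies on the central face of an ear-chain, re-designating its root edge to be an edge incident to $x$ places $x$ at a chain end, an allowed position. Carrying this out is a finite case analysis on the faces at $x$ (chiefly the two ends of $\pi_x$ and whether $F'$ carries one or two spine chords) together with the degree-$3$/degree-$4$ bookkeeping; this bookkeeping, rather than any single hard idea, is the crux.
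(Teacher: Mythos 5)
Your first half is essentially sound: taking a leaf $F$ of the weak dual tree $T$ farthest from the path $\pi_x$ of faces incident to $x$, the parent $F'$ has all of its non-spine neighbours leaves, and when $F$ is at distance at least $2$ from $\pi_x$ this does yield a good ear (if $F'$ has a boundary edge on the outer cycle) or a good ear-chain (if not). The genuine gap is the remaining case, which you yourself flag as ``the crux'' and leave as an uncarried-out case analysis; worse, the reduction you propose for it is incorrect on two counts. First, the claim that a leaf-ear whose root edge avoids $x$ ``has vertex set missing $x$'' fails when $x$ is a degree-$2$ internal vertex of that very ear. Second, and more seriously, such a leaf-ear need not have a degree-$3$ root endpoint: both endpoints of its root edge can have degree $4$ (this happens whenever the parent face carries several chords), and this is exactly the situation the ear-chain notion exists to handle. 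Hence you cannot conclude that ``every leaf-ear is rooted at a chord through $x$,'' and the ensuing deduction that $T=\pi_x$ and $G$ is fan-like collapses. The re-rooting trick for ear-chains is likewise only available when \emph{every} boundary edge of the central face is the root edge of an ear, which you have not arranged. A concrete problematic configuration: an internal spine face $P_j$ (so $x\in V(P_j)$) whose boundary consists of the two chords at $x$ together with chords each carrying a leaf triangle; none of those triangles is a good ear, $P_j$ supports no ear-chain with an admissible root edge, and the good structure must be located elsewhere in the graph, for which your argument provides no mechanism.

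For comparison, the paper avoids this trap with a different extremal choice. It first disposes of the degenerate case in which every chord of $G$ is the root edge of an ear (there the subgraph induced by these root edges is a cycle or a union of paths, giving an ear-chain or an end-ear with a degree-$3$ root vertex). Otherwise it picks a chord $uv$ that is \emph{not} an ear-root, minimizing the side $H_{uv}$ of the outer cycle that avoids $x$; minimality forces every other chord inside $H_{uv}$ to be an ear-root, and the induced ``chord graph'' there is either a $uv$-path (yielding an ear-chain rooted at $uv$, automatically good for $x$) or has a path-endpoint outside $\{u,v\}$ (yielding a degree-$3$ good ear). If you want to keep your dual-tree language, replacing ``leaf farthest from $\pi_x$'' by this minimality over non-ear-root chords on the side away from $x$ is the fix: it makes the region you analyse automatically disjoint from $x$ except at the two root vertices, which is precisely what your distance-$\le 1$ case lacks.
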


\begin{proof}
  Let $G$ be a $2$-connected outerplanar graph which is not a cycle.
  We fix a vertex $x\in V(G)$.
  Let $F$ be the outercycle of $G$.
  Let $E_1$ be the set of edges which is the root edge of an ear of $G$, and let $G_1$ be a subgraph of $G$ induced by $E_1$.
  We first suppose that $E(G)=E(F)\cup E_1$.
  By the definition of $E_1$, $G_1$ is either a cycle or a disjoint union of paths.
  Suppose that $G_1$ is a cycle $v_1v_2v_3\cdots v_{\ell}v_1$, where $H_i$ is an ear with the root edge $v_iv_{i+1}$ for each $i\in [\ell]$ ($v_{\ell+1}=v_1$) and $x\in V(H_\ell)$.
  Then a subgraph induced by $\bigcup_{i=1}^{\ell-1}V(H_i)$ is a good ear-chain for $x$ with the root edge $v_1v_{\ell}$.
  Otherwise, since there is at least two vertices of $G_1$ of degree $1$,
  there is an edge $v_1v_2$ of $G_1$ such that $d_{G_1}(v_1)=1$ and the ear $H_1$ of $G$ with the root edge $v_1v_2$ satisfies $x\notin V(H_1)\setminus\{v_2\}$.
  Since $d_G(v_1)=d_{G_1}(v_1)+2=3$, $H_1$ is a good ear for $x$.

  Now assume that $E(G)\setminus (E(F)\cup E_1)\neq \emptyset$.
  For each edge $uv\in E(G)\setminus (E(F)\cup E_1)$, let $H_{uv}$ be the subgraph of $G$ induced by the vertices of an $uv$-subpath of $F$ such that $x\notin V(H_{uv})\setminus\{u,v\}$.
  We take an edge $uv\in E(G)\setminus (E(F)\cup E_1)$ so that the order of $H_{uv}$ is as small as possible.
  By the choice of $uv$, we have $E(H_{uv})\subseteq E(F)\cup E_1\cup \{uv\}$.
  Furthermore, we have $E_1\cap E(H_{uv})\neq \emptyset$ since $uv\notin E_1$.
  Let $G_2$ be a subgraph of $G$ induced by $E_1\cap E(H_{uv})$.
  As $G_2$ is a subgraph of $G_1$, which has the maximum degree at most 2, $G_2$ is either a $uv$-path or a disjoint union of paths.
  If $G_2$ is a $uv$-path, then $H_{uv}$ is an ear-chain of $G$ which is good for $x$.
  Otherwise, there is an edge $v_1'v_2'$ of $G_2$ such that $d_{G_2}(v_1')=1$ and $v_1'\notin \{u,v\}$, and let $H_1'$ be an ear of $G$ with the root edge $v_1'v_2'$.
  Since $d_G(v_1')=d_{G_2}(v_1')+2=3$, $H_1'$ is a good ear of $G$ which is good for $x$.
\end{proof}

The following lemmas are used in our proof of Theorem~\ref{thm:outerplanar}.

\begin{lemma}\label{lem:outerplanar ear}
  Let $G$ be a $2$-connected outerplanar graph that is not a cycle.
  Let $H=u_1u_2\cdots u_ru_1$ be an ear of $G$ with the root edge $u_1u_r$, and let $G'=G-(V(H)\setminus\{u_1,u_r\})$.
  Let $L$ be a list assignment of $G$ such that $|L(u_i)|\geq 4$ for every $i\in \{2,3,4,\dots , r-1\}$.
  Suppose that there is a proper $L$-coloring $\phi$ of $G'$ such that either $\mathcal{U}_{\phi}(u_1,G')\neq \emptyset$ or $N_{G'}(u_1)$ is colored with one color.
  Then $\phi$ can be extended to a proper $L$-coloring of $G'\cup H$ such that $\mathcal{U}_{\phi}(u_i,G)\neq \emptyset$ for each $i\in [r-1]$.
\end{lemma}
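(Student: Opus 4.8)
The plan is to keep the given coloring $\phi$ unchanged on $G'$ and to color only the internal path $u_2 u_3 \cdots u_{r-1}$, exploiting that the structural hypotheses enter solely through two facts: each internal vertex $u_i$ ($2 \le i \le r-1$) has $N_G(u_i)=\{u_{i-1},u_{i+1}\}$, and $u_1u_r\in E(G)$ is the root edge, so $\phi(u_1)\ne\phi(u_r)$ and $u_r\in N_{G'}(u_1)$. For a degree-$2$ vertex $u_i$, the condition $\mathcal{U}_{\phi}(u_i,G)\ne\emptyset$ is equivalent to $\phi(u_{i-1})\ne\phi(u_{i+1})$; combined with properness along the path, the whole task reduces to coloring $u_2,\dots,u_{r-1}$ so that any two vertices of the sequence $u_1,u_2,\dots,u_r$ within distance $2$ receive distinct colors, the endpoint colors $\phi(u_1),\phi(u_r)$ being already fixed. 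The governing observation, which I would verify throughout, is that each internal vertex will face at most three already-committed colors while $|L(u_i)|\ge 4$, leaving a free choice.

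First I would fix $\phi(u_2)$ so as to secure the conflict-free condition of $u_1$; this is the only place the hypothesis is used. Since $N_G(u_1)=N_{G'}(u_1)\cup\{u_2\}$, I would split on the disjunction. If some color $c$ already appears exactly once in $N_{G'}(u_1)$, I would demand $\phi(u_2)\ne c$, so that $c$ still appears exactly once in $N_G(u_1)$ and $c\in\mathcal{U}_{\phi}(u_1,G)$. If instead $N_{G'}(u_1)$ is monochromatic of color $\gamma$ (necessarily $\gamma=\phi(u_r)\ne\phi(u_1)$), I would demand $\phi(u_2)\ne\gamma$, so that $\phi(u_2)$ itself becomes the unique color at $u_1$. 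In either case $\phi(u_2)$ must avoid $\phi(u_1)$ together with one further ``bad'' color, i.e. at most two forbidden colors.

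Next I would sweep $u_3,u_4,\dots,u_{r-1}$ from left to right, choosing $\phi(u_i)\notin\{\phi(u_{i-1}),\phi(u_{i-2})\}$; this single rule simultaneously enforces properness of the edge $u_{i-1}u_i$ and the conflict-free condition $\phi(u_{i-2})\ne\phi(u_i)$ of $u_{i-1}$. The two-ended nature of the problem forces two additional constraints near the root edge, which I would attach to the right-hand vertices: the conflict-free condition of $u_{r-1}$ reads $\phi(u_{r-2})\ne\phi(u_r)$, so when coloring $u_{r-2}$ I would also forbid $\phi(u_r)$; and $u_{r-1}$ must be proper with the fixed $u_r$, so it too forbids $\phi(u_r)$. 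Each of these vertices then has at most three forbidden colors.

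The main obstacle, and the only genuine subtlety, is the interaction of the two boundaries when $r$ is small, where the distinguished vertices $u_2$, $u_{r-2}$, $u_{r-1}$ coincide and their constraints pile onto a single vertex. I would dispose of this by checking the extremal cases directly: for $r=3$ the sole internal vertex $u_2$ needs only $\phi(u_2)\notin\{\phi(u_1),\phi(u_r),\text{bad}\}$ while its own conflict-free condition $\phi(u_1)\ne\phi(u_r)$ is automatic from the root edge; for $r=4$ the vertex $u_2=u_{r-2}$ must additionally avoid $\phi(u_r)$, again three colors in total. Thus the uniform bound of three forbidden colors against a list of size at least four persists for every vertex, and a final bookkeeping pass would confirm that each required instance of properness and of $\mathcal{U}_{\phi}(u_i,G)\ne\emptyset$ for $i\in[r-1]$ has been enforced by exactly one of the rules above.
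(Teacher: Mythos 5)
Your proposal is correct and follows essentially the same route as the paper's proof: choose $\phi(u_2)$ avoiding $\phi(u_1)$ and the one ``bad'' color to secure $u_1$'s condition, then greedily sweep along the ear with the distance-two rule, adding $\phi(u_r)$ to the forbidden sets of $u_{r-2}$ and $u_{r-1}$. Your explicit check of the small cases $r=3,4$ is a minor (and welcome) addition; otherwise the two arguments coincide.
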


\begin{proof}
  Note that $u_1, u_r\in V(G')$.
  Without loss of generality, we may assume that $\phi(u_1)=1$.
  We may also assume that either $2\in \mathcal{U}_{\phi}(u_1,G')$ or every vertex of $N_{G'}(u_1)$ is colored by a color $2$.
  We define $\phi(u_i)\in L(u_i)$ for each $i\in \{2,3,\dots ,r-1\}$ as follows.
  Let $\phi(u_2)\in L(u_2)\setminus\{1,2\}$, and for $i=3,4,\dots ,r-3$, we sequentially choose $\phi(u_i)\in L(u_i)\setminus\{\phi(u_{i-2}),\phi(u_{i-1})\}$.
  Then we choose a color in $L(u_{r-2})\setminus\{\phi(u_r),\phi(u_{r-4}),\phi(u_{r-3})\}$ as $\phi(u_{r-2})$, and choose a color in $L(u_{r-1})\setminus\{\phi(u_r),\phi(u_{r-3}),\phi(u_{r-2})\}$ as $\phi(u_{r-1})$.
  By the choice of colors, we have $\phi(u_{i-1})\in\mathcal{U}_{\phi}(u_i,G)$ for each $i\in\{2,3,\dots ,r-1\}$.
  Furthermore, we have $2\in \mathcal{U}_{\phi}(u_1,G)$ if $2\in \mathcal{U}_{\phi}(u_1,G')$ and $\phi(u_2)\in\mathcal{U}_{\phi}(u_1,G)$ otherwise.
  Hence $\phi$ is a desired coloring of $G$.
\end{proof}

\begin{lemma}\label{lem:precolored path}
  Let $P=u_0u_1\cdots u_{s}$ be a path of length $s\geq 3$, and let $L$ be a list assignment of $P$ such that $|L(u_i)|\geq 2$ for $i\in\{0,s\}$, $|L(u_i)|\geq 3$ for $i\in\{1,s-1\}$, and $|L(u_i)|\geq 4$ for other vertices.
  Then $P$ is proper conflict-free $L$-colorable.
\end{lemma}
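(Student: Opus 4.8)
The plan is to first reduce the statement to a cleaner combinatorial form. Since shrinking a list can only make the task harder, I would assume without loss of generality that $|L(u_0)|=|L(u_s)|=2$, $|L(u_1)|=|L(u_{s-1})|=3$, and $|L(u_i)|=4$ for $2\le i\le s-2$. On a path the two endpoints have degree $1$, so the conflict-free condition is automatic there; for an internal vertex $u_i$ it says exactly that its two neighbours $u_{i-1},u_{i+1}$ receive distinct colors. Together with properness this means a coloring $\phi$ is proper conflict-free if and only if every three consecutive vertices $u_{i-1},u_i,u_{i+1}$ get pairwise distinct colors (equivalently, $\phi$ is a proper $L$-coloring of the square of $P$, whose degree sequence is precisely $2,3,4,\dots,4,3,2$, matching the prescribed list sizes). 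I would then build such a coloring greedily from the left, the only real difficulty being the two size-$2$ endpoints, where the list size equals the degree and hence leaves no slack.

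For $s\ge 4$ I would color $u_0,u_1,\dots,u_{s-4}$ greedily from the left: $u_0$ is free, and each subsequent vertex must avoid only its (at most two) already-colored predecessors within distance $2$, so its list of size $\ge 3$ leaves at least one choice. The heart of the argument is the completion of the last four vertices. Writing $\beta=\phi(u_{s-3})$ and $\gamma=\phi(u_{s-4})$, I would show that the triple $u_{s-2},u_{s-1},u_s$ can always be completed provided $\beta$ avoids a single \emph{dangerous} color $x^\ast$, defined (when it exists) by $L(u_{s-1})=L(u_s)\cup\{x^\ast\}$; when $L(u_s)\not\subseteq L(u_{s-1})$ no such color exists and there is nothing to avoid. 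Because $u_{s-3}$ must avoid only its predecessors within distance two (at most two of them, and only $u_0$ when $s=4$) together with $x^\ast$, the number of forbidden colors is at most $|L(u_{s-3})|-1$, so at least one admissible choice $\beta\neq x^\ast$ remains.

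It then remains to verify the completion claim itself, which is the main obstacle and exactly where the tightness at the endpoints is felt. Given $\beta\neq x^\ast$, I would first try to pick $\phi(u_{s-2})\in L(u_{s-2})\setminus(\{\beta,\gamma\}\cup L(u_s))$; if this set is nonempty then $\phi(u_{s-2})\notin L(u_s)$, and for any admissible choices at $u_{s-1},u_s$ the color $\phi(u_s)$ can avoid both $\phi(u_{s-1})$ and $\phi(u_{s-2})$ inside $L(u_s)$. The only remaining possibility is $L(u_{s-2})=\{\beta,\gamma\}\cup L(u_s)$ with four distinct colors, forcing $\phi(u_{s-2})$ into $L(u_s)=\{y,z\}$, say $\phi(u_{s-2})=y$; here I would color $u_{s-1}$ from $L(u_{s-1})\setminus\{y,\beta,z\}$, which is nonempty precisely because $\beta\neq x^\ast$ excludes $L(u_{s-1})=\{y,z,\beta\}=L(u_s)\cup\{\beta\}$, and then set $\phi(u_s)=z$. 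Finally, the case $s=3$ (where the square of $P$ is $K_4$ minus an edge) must be handled separately: I would color the two central vertices $u_1,u_2$ with a distinct pair of colors avoiding the at most two forbidden pairs $L(u_0)$ and $L(u_3)$, which is possible since $|L(u_1)|=|L(u_2)|=3$ always yield at least three distinct realizable pairs, and then color the degree-$2$ endpoints $u_0,u_3$.
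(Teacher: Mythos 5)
Your proof is correct, and it takes a genuinely different route from the paper's. You first make explicit the reduction to properly list-coloring the square $P^2$ (whose degree sequence matches the prescribed list sizes), and then run a single left-to-right greedy pass, protecting the tight right end by forbidding one ``dangerous'' color $x^\ast$ at $u_{s-3}$ and finishing the last four vertices by a two-branch case analysis; the case $s=3$ is settled by counting realizable color pairs on $\{u_1,u_2\}$ (at least three, versus at most two forbidden pairs $L(u_0)$, $L(u_3)$). The paper instead argues by induction on $s$: it precolors $u_s$ with an arbitrary $\alpha\in L(u_s)$, deletes $\alpha$ from the lists of $u_{s-1}$ and $u_{s-2}$, and applies the induction hypothesis to the shorter path, so everything collapses onto a hand-checked base case $s=3$ (split on whether $L(u_0)\cap L(u_3)$ is empty). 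The paper's induction is shorter and avoids your dangerous-color bookkeeping; your version buys a conceptually cleaner framing --- the lemma is exactly degree-choosability of $P^2$, which is not a Gallai tree, so one could even replace the whole argument by citing the Erd\H{o}s--Rubin--Taylor/Borodin characterization --- at the cost of a more delicate endgame. All the individual steps you sketch (the nonemptiness of $L(u_{s-1})\setminus\{y,\beta,z\}$ when $\beta\neq x^\ast$, the count of at least three realizable pairs) do check out, though the pair-counting claim for $s=3$ deserves the one-line verification that distinct $b,b'\in L(u_2)$ contribute at least $2+2-1=3$ distinct unordered pairs.
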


\begin{proof}
  The proof goes by induction on $s$.
  If $s=3$, then we have $|L(u_0)|, |L(u_3)|\geq 2$ and $|L(u_1)|, |L(u_2)|\geq 3$.
  We may assume that $|L(u_0)|=|L(u_3)|=2$ and $|L(u_1)|=|L(u_2)|=3$.
  If there is a color $\alpha\in L(u_0)\cap L(u_3)$, then we let $\phi(u_0)=\phi(u_3)=\alpha$, and choose $\phi(u_1)\in L(u_1)\setminus\{\alpha\}$ and $\phi(u_2)\in L(u_2)\setminus\{\alpha\}$ so that $\phi(u_1)\neq \phi(u_2)$.
  It is easy to verify that $\phi$ is a proper conflict-free coloring of $P$.
  Hence, we may assume that $L(u_0)\cap L(u_3)=\emptyset$.
  Since $|L(u_0)\cup L(u_3)|=4$, there is a color $\beta\in (L(u_0)\cup L(u_3))\setminus L(u_1)$.
  We assume that $\beta\in L(u_0)$. (The case $\beta\in L(u_3)$ goes symmetrically.)
  Let $\phi(u_0)=\beta$ and choose $\phi(u_3)\in L(u_3)$ arbitrarily.
  Then we choose $\phi(u_2)\in L(u_2)\setminus\{\beta, \phi(u_3)\}$ and choose $\phi(u_1)\in L(u_1)\setminus\{\beta,\phi(u_2),\phi(u_3)\}$.
  It is easy to verify that $\phi$ is a proper conflict-free $L$-coloring of $P$.

  Next we suppose that $s\geq 4$ and the statement holds for paths of length less than $s$.
  Set $\alpha\in L(u_s)$ arbitrarily.
  We define a list assignment $L'$ of $P':=u_0u_1\cdots u_{s-1}$ by $L'(u_i)=L(u_i)\setminus\{\alpha\}$ for $i\in\{s-2, s-1\}$ and $L'(u_i)=L(u_i)$ for $0\leq i\leq s-3$.
  By the induction hypothesis of the claim, $P'$ admits a proper conflict-free $L'$-coloring $\phi$.
  By setting $\phi(u_s)=\alpha$, we can extend $\phi$ to a proper conflict-free $L$-coloring of $P$.
\end{proof}

\section{Proof of Theorem~\ref{thm:outerplanar}}\label{sec:proof}

\setcounter{case}{0}

The proof goes by induction on the number of vertices.

Let $G$ be a connected outerplanar graph that is not isomorphic to $C_5$.
The statement is trivial when $|V(G)|\leq 3$, thus we assume that $|V(G)|\geq 4$.
Let $L$ be a list assignment of $G$ satisfying $|L(v)|=d_G(v)+2$ for every vertex $v\in V(G)$.
We fix an end block $B$ of $G$.
Note that $B=G$ when $G$ is $2$-connected.
Let $x$ be a cut-vertex of $G$ uniquely contained in $B$ when $G$ has at least two blocks, and let $x$ be an arbitrary vertex of $B$ when $B=G$.
We first consider the case when $B$ is isomorphic to the complete graph $K_2$.

\begin{case}\label{case:k2}
    $B$ is isomorphic to $K_2$.
\end{case}

\noindent
Let $v$ be the vertex of $B$ other than $x$.
Note that $d_G(x)\geq 2$.
Let $G'=G-v$.
Since $G'$ is a connected outerplanar graph and $|L(x)|\geq d_G(x)+2=d_{G'}(x)+3$, by the induction hypothesis and Proposition~\ref{prop:c5}, $G'$ admits a proper conflict-free $L$-coloring $\phi$.
As $d_{G'}(x)=d_G(x)-1\geq 1$, let $\alpha$ be a color in $\mathcal{U}_{\phi}(x,G')$.
Then we extend $\phi$ to a proper conflict-free $L$-coloring of $G$ by setting $\phi(v)\in L(v)\setminus \{\phi(x),\alpha\}$.

\medskip
Next, we consider the case where $B$ is a cycle.

\begin{case}\label{case:cycle}
  $B$ is a cycle.
\end{case}

\noindent
When $B=G$, by Proposition~\ref{prop:cycle}, $G$ is proper conflict-free $L$-colorable.
Hence, we assume that $V(G)\setminus V(B)\neq\emptyset$.
Let $G'=G-(V(B)\setminus\{x\})$.
Since $G'$ is a connected outerplanar graph and $|L(x)|\geq d_G(x)+2=d_{G'}(x)+4$, by the induction hypothesis and Proposition~\ref{prop:c5}, $G'$ is proper conflict-free $L$-colorable.
Let $\phi$ be a proper conflict-free $L$-coloring of $G'$.
Since $N_{G'}(x)\neq\emptyset$, without loss of generality, we may assume that $\phi(x)=1$ and there exists a color $\alpha\in \mathcal{U}_{\phi}(x,G')$.

Let $B=xu_0u_1\cdots u_{\ell-2}x$, where $\ell\geq 3$ is the length of $B$.
Note that $|L(u_i)|=d_G(u_i)+2=4$ for every $i\in \{0,1,\dots ,\ell-2\}$.
If $\ell=3$, then we let $\phi(u_0)\in L(u_0)\setminus\{1,\alpha\}$ and let $\phi(u_1)\in L(u_1)\setminus\{1,\alpha,\phi(u_0)\}$ to obtain a proper conflict-free $L$-coloring of $G$.
If $\ell=4$, then we let $\phi(u_0)\in L(u_0)\setminus\{1,\alpha\}$, let $\phi(u_2)\in L(u_2)\setminus\{1,\alpha, \phi(u_0)\}$, and let $\phi(u_1)\in L(u_1)\setminus\{1,\phi(u_0),\phi(u_2)\}$ to obtain a proper conflict-free $L$-coloring of $G$.
Hence, we assume that $\ell\geq 5$.
Let $L_B$ be a list assignment of $B-x$ such that $L_B(u_i)=L(u_i)\setminus\{1,\alpha\}$ for $i\in\{0,\ell-2\}$, $L_B(u_i)=L(u_i)\setminus\{1\}$ for $i\in\{1,\ell-3\}$, and let $L_B(u_i)=L(u_i)$ for $2\leq i\leq \ell-2$.
Then we have $|L_B(u_i)|\geq |L(u_i)|-2=2$ for $i\in\{0,\ell-2\}$, $|L_B(u_i)|\geq |L(u_i)|-1=3$ for $i\in\{1,\ell-3\}$, and $|L_B(u_i)|=|L(u_i)|=4$ for $2\leq i\leq \ell-4$.
By Lemma~\ref{lem:precolored path}, the path $B-x$ admits a proper conflict-free $L_B$-coloring $\phi_B$.
By setting $\phi(u_i)=\phi_B(u_i)$ for every $i\in\{0,1,\dots , \ell-2\}$, we obtain a proper conflict-free $L$-coloring of $G$.

\medskip
By the above two cases, we may assume that $B$ is a 2-connected outerplanar graph that is not a cycle.
According to Lemma~\ref{lem:outerplanar unavoidable}, $B$ has either an ear or an ear-chain that is good for $x$.
  
\begin{case}\label{case:good ear}
  $B$ contains an ear that is good for $x$.
\end{case}

\noindent
Let $H=u_1u_2\cdots u_ru_1$ be a good ear for $x$ with the root edge $u_1u_r$ such that $d_G(u_i)=2$ for each $i\in \{2,3,\dots ,r-1\}$, $x\notin V(H)\setminus\{u_1\}$, and $d_G(u_r)=3$.
Let $G'=G-(V(H)\setminus\{u_1,u_r\})$.
Then $G'$ is a connected outerplanar graph.
Since $|L(u_1)|=d_G(u_1)+2=d_{G'}(u_1)+3$, by the induction hypothesis and Proposition~\ref{prop:c5}, $G'$ admits a proper conflict-free $L$-coloring $\phi$.
By Lemma~\ref{lem:outerplanar ear}, $\phi$ can be extended to a proper $L$-coloring of $G$ such that $\mathcal{U}_{\phi}(v,G)\neq\emptyset$ for every vertex $v$ other than $u_r$.
Since $d_{G'}(u_r)=2$ and $\mathcal{U}_{\phi}(u_r,G')\neq\emptyset$, we have $|\mathcal{U}_{\phi}(u_r,G')|=2$.
Thus, it follows that $|\mathcal{U}_{\phi}(u_r,G)|\geq |\mathcal{U}_{\phi}(u_r,G')|-1\geq 1$, which implies that $\phi$ is a proper conflict-free $L$-coloring of $G$.

\medskip
Before going to the case that $G$ contains an ear-chain, we consider the case when $G$ contains an ear with at least $6$ vertices.

\begin{case}\label{case:long ear}
  $B$ contains an ear $H$ such that $x\notin V(H)$ and $|V(H)|\geq 6$.
\end{case}

\noindent
Let $H=u_1u_2\cdots u_ru_1$ be an ear of $B$ with the root edge $u_1u_r$ such that $d_G(u_i)=2$ for each $i\in \{2,3,\dots ,r-1\}$.
Suppose that $r\geq 6$.
Let $G'=G-\{v_i\mid 2\leq i\leq r-1\}$.
Then $G'$ is a connected outerplanar graph.
Since $|L(u_1)|=d_G(u_1)+2=d_{G'}(u_1)+3$, by the induction hypothesis and Proposition~\ref{prop:c5}, $G'$ admits a proper conflict-free $L$-coloring $\phi$.
Without loss of generality, we may assume $\phi(v_1)=1$ and $\phi(v_r)=2$.
Let $\alpha$ be a color in $\mathcal{U}_{\phi}(u_1,G')$, and let $\beta$ be a color in $\mathcal{U}_{\phi}(u_r,G')$.
Note that $r-2>3$ by the assumption of the case.
Let $\phi(u_2)\in L(u_2)\setminus\{1,\alpha\}$.
We consider the following three subcases.

\begin{subcase}
  $|L(u_{r-1})\cap \{2,\beta\}|\leq 1$.
\end{subcase}

\noindent
For $i=3,4,\dots ,r-3$, we sequentially choose a color in $L(u_i)\setminus\{\phi(u_{i-2}),\phi(u_{i-1})\}$ as $\phi(u_i)$.
Then we choose a color in $L(u_{r-2})\setminus\{2,\phi(u_{r-4}),\phi(u_{r-3})\}$ as $\phi(u_{r-2})$.
Since $|L(u_{r-1})\cap \{2,\beta\}|\leq 1$, we can choose $\phi(u_{r-1})\in L(u_{r-1})\setminus\{2,\beta,\phi(u_{r-3}),\phi(u_{r-2})\}$.
It is easy to verify that $\phi$ is a proper conflict-free $L$-coloring of $G$.

\begin{subcase}
  $\{2,\beta\}\subseteq L(u_{r-1})$ and $L(u_{r-2})\setminus L(u_{r-1})\neq \emptyset$.
\end{subcase}

\noindent
Let $\phi(u_{r-2})\in L(u_{r-2})\setminus L(u_{r-1})$. 
Note that $\phi(u_{r-2})\neq 2$ since $2\in L(u_{r-1})$.
For $i=3,4,\dots , r-3, r-1$, we sequentially choose a color $\phi(u_i)\in L(u_i)$ as follows.
For $i\in \{3,4,\dots ,r-5\}$, we let $\phi(u_i)\in L(u_i)\setminus\{\phi(u_{i-2}),\phi(u_{i-1})\}$.
For $i\in \{r-4,r-3\}$, we let $\phi(u_i)\in L(u_i)\setminus\{\phi(u_{r-2}),\phi(u_{i-2}),\phi(u_{i-1})\}$.
Since $\phi(u_{r-2})\notin L(u_{r-1})$, we may choose $\phi(u_{r-1})\in L(u_{r-1})\setminus\{2,\beta,\phi(u_{r-3}),\phi(u_{r-2})\}$.
It is easy to verify that $\phi$ is a proper conflict-free $L$-coloring of $G$.

\begin{subcase}
  $\{2,\beta\}\subseteq L(u_{r-1})$ and $L(u_{r-2})\setminus L(u_{r-1})= \emptyset$.
\end{subcase}

\noindent
Since $|L(u_{r-1})|=|L(u_{r-2})|=4$, the subcase assumption implies that $L(u_{r-1})=L(u_{r-2})$, and thus $\beta\in L(u_{r-2})$.
Let $\phi(u_{r-2})=\beta$.
For $i=3,4,\dots , r-3, r-1$, we sequentially choose a color $\phi(u_i)\in L(u_i)$ as follows.
For $i\in \{3,4,\dots ,r-5\}$, we let $\phi(u_i)\in L(u_i)\setminus\{\phi(u_{i-2}),\phi(u_{i-1})\}$.
For $i\in \{r-4,r-3\}$, we let $\phi(u_i)\in L(u_i)\setminus\{\beta,\phi(u_{i-2}),\phi(u_{i-1})\}$.
Then, we choose a color in $L(u_{r-1})\setminus\{2,\beta,\phi(u_{r-3})\}$ as $\phi(u_{r-1})$.
It is easy to verify that $\phi$ is a proper conflict-free $L$-coloring of $G$.
This completes the proof for Case~\ref{case:long ear}.

\medskip
Now we consider the case that $B$ contains an ear-chain that is good for $x$.

\begin{case}\label{case:ear-chain}
  $B$ contains an ear-chain that is good for $x$.
\end{case}

\noindent
Let $H$ be an ear-chain of $B$ consisting of ears $H_1,H_2,\dots,H_{s-1}$ $(s\geq 3)$ of $B$ such that the root edge of $H_i$ is $v_iv_{i+1}$ for each $i\in [s-1]$.
Without loss of generality, we may assume that $x\notin V(H)\setminus\{v_1\}$.
By Case~\ref{case:long ear}, we may assume that $|V(H_{s-1})|\leq 5$.
Note that $|L(v_i)|=6$ for each $i\in \{2,3,\dots ,s-1\}$ and $|L(v)|=4$ for every vertex $v\in V(H)\setminus\{v_i\mid 1\leq i\leq s\}$.
We consider the following four subcases.
In each of the following cases, let $G'=G-(V(H)\setminus\{v_1,v_s\})$, which is a connected outerplanar graph.

\begin{subcase}\label{subcase:ear-chain more than 4}
  $s\geq 4$.
\end{subcase}

\noindent
Since $|L(v_1)|=d_G(v_1)+2=d_{G'}(v_1)+4$, by the induction hypothesis and Proposition~\ref{prop:c5}, $G'$ admits a proper conflict-free $L$-coloring $\phi$.
Without loss of generality, we may assume that $\phi(v_1)=1$ and $\phi(v_s)=2$.
Let $\alpha$ be a color in $\mathcal{U}_{\phi}(v_1,G')$ and let $\beta$ be a color in $\mathcal{U}_{\phi}(v_s,G')$.
Note that $\alpha\neq 1$ and $\beta\neq 2$.
Let $H_{s-1}=v_{s-1}u_2u_3\cdots u_{r-1}v_sv_{s-1}$, where $r\in\{3,4,5\}$.

We give a coloring of $\{v_i\mid 2\leq i\leq s-1\}\cup V(H_{s-1})$ as follows.
We first choose $\phi(u_{r-1})\in L(u_{r-1})\setminus\{2,\beta\}$, and for $j=r-2,r-3,\dots ,2$, we sequentially choose a color in $L(u_j)\setminus\{2,\phi(u_{j+1}),\phi(u_{j+2})\}$ as $\phi(u_j)$, where $u_r=v_s$.
Next, we color $v_i$'s.
Let $\phi(v_{s-1})\in L(v_{s-1})\setminus \{2,\beta,\phi(u_2),\phi(u_3)\}$.
For each $i\in \{2,3,\dots, s-2\}$, since $|L(v_i)|=6$, we can choose $\phi(v_i)\in L(v_i)$ so that $\phi(v_2)\notin \{1,\alpha\}$, $\phi(v_{s-2})\notin \{2,\phi(v_{s-1}),\phi(u_2)\}$, and $\phi(v_i)\neq \phi(v_{i-1})$ for every $i\in \{2,3,\dots , s-2\}$.
Note that $\mathcal{U}_{\phi}(v_1,G)\neq \emptyset$ by the choice of $\phi(v_2)$.

For the remaining ears $H_1, H_2,\dots ,H_{s-2}$, we sequentially apply Lemma~\ref{lem:outerplanar ear} to extend $\phi$ to a proper $L$-coloring of $G$ such that $\mathcal{U}_{\phi}(v,G)\neq\emptyset$ for every vertex $v\in V(H_i)\setminus\{v_{i+1}\}$.
Indeed, for $i\in \{2,3,\dots , s-2\}$, when we color $H_i$ with the root edge $v_iv_{i+1}$, the vertex $v_i$ has three colored neighbors.
Thus, either $\mathcal{U}_{\phi}(v_i,G)\neq \emptyset$ or all three neighbors receive the same color, so we can apply Lemma~\ref{lem:outerplanar ear} to color $H_i$.
By the choice of colors, it is easy to verify that $\mathcal{U}_{\phi}(v,G)\neq \emptyset$ for every vertex $v\in V(G)\setminus\{v_{s-1}\}$.
Since $d_G(v_{s-1})=4$ and $\phi(v_{s-2})$, $\phi(v_s)$ and $\phi(u_2)$ are three distinct colors, we have $\mathcal{U}_{\phi}(v_{s-1},G)\neq \emptyset$.
Thus, $\phi$ is a proper conflict-free $L$-coloring of $G$.

\begin{subcase}\label{subcase:ear-chain 3}
  $s=3$ and $|V(H_2)|=3$.
\end{subcase}

\noindent
Let $H_2=v_2u_2v_3v_2$.
Since $|L(v_1)|=d_G(v_1)+2=d_{G'}(v_1)+4$, by the induction hypothesis and Proposition~\ref{prop:c5}, $G'$ admits a proper conflict-free $L$-coloring $\phi$.
Without loss of generality, we may assume that $\phi(v_1)=1$ and $\phi(v_3)=2$.
Let $\alpha$ be a color in $\mathcal{U}_{\phi}(v_1,G')$ and $\beta$ be a color in $\mathcal{U}_{\phi}(v_3,G')$.
We choose $\phi(u_2)\in L(u_2)\setminus\{1,2,\beta\}$, and choose $\phi(v_2)\in L(v_2)\setminus\{1,2,\alpha,\beta,\phi(u_2)\}$.
Then we apply Lemma~\ref{lem:outerplanar ear} to the ear $H_1$ to extend $\phi$ to a proper $L$-coloring of $G$ such that $\mathcal{U}_{\phi}(v,G)\neq\emptyset$ for every vertex $v\in V(H_1)\setminus\{v_2\}$.
By the choice of colors, it is easy to verify that $\mathcal{U}_{\phi}(v,G)\neq \emptyset$ for every vertex $v\in V(G)\setminus\{v_2\}$.
Since $d_G(v_2)=4$ and $\phi(v_1)$, $\phi(v_3)$ and $\phi(u_2)$ are three distinct colors, we have $\mathcal{U}_{\phi}(v_2,G)\neq \emptyset$, and thus $\phi$ is a proper conflict-free $L$-coloring of $G$.

\begin{subcase}\label{subcase:ear-chain 4}
  $s=3$ and $|V(H_2)|=4$.
\end{subcase}

\noindent
Let $H_2=v_2u_2u_3v_3v_2$.
We fix a color $\gamma$ such that $\gamma\in L(u_2)\setminus L(u_3)$ if $L(u_2)\setminus L(u_3)\neq \emptyset$ and $\gamma\in L(u_2)$ otherwise.
Let $L'$ be a list assignment of $G'$ such that $L'(v_1)=L(v_1)\setminus \{\gamma\}$, $L'(v_3)=L(v_3)\setminus \{\gamma\}$, and $L'(v)=L(v)$ for every vertex $v\in V(G')\setminus\{v_1,v_3\}$.
Since $|L'(v)|\geq d_{G'}(v)+2$ for every vertex $v\in V(G')$ and $|L'(v_1)|\geq |L(v_1)|-1 = d_G(v_1)+1 = d_{G'}(v_1)+3$, by the induction hypothesis and Proposition~\ref{prop:c5}, $G'$ admits a proper conflict-free $L'$-coloring $\phi$.
Without loss of generality, we may assume that $\phi(v_1)=1$ and $\phi(v_3)=2$.
Let $\alpha$ be a color in $\mathcal{U}_{\phi}(v_1,G')$ and let $\beta$ be a color in $\mathcal{U}_{\phi}(v_3,G')$.
Note that $\gamma\notin \{1,2\}$ by the definition of $L'$.

Suppose first that $\beta=1$.
We let $\phi(u_2)\in L(u_2)\setminus\{1,2\}$, $\phi(u_3)\in L(u_3)\setminus\{1,2,\phi(u_2)\}$, and $\phi(v_2)\in L(v_2)\setminus\{1,2,\alpha,\phi(u_2),\phi(u_3)\}$.
Then we apply Lemma~\ref{lem:outerplanar ear} to the ear $H_1$ to extend $\phi$ to a proper $L$-coloring of $G$ such that $\mathcal{U}_{\phi}(v,G)\neq \emptyset$ for every vertex $v\in V(G)\setminus\{v_2\}$.
Since $d_G(v_2)=4$ and $\phi(v_1)$, $\phi(v_3)$, $\phi(u_2)$ are three distinct colors, we infer that $\mathcal{U}_{\phi}(v_2,G)\neq \emptyset$, and thus $\phi$ is a proper conflict-free $L$-coloring of $G$.
  
Thus, we may assume that $\beta\neq 1$.
Then there is a color $\gamma'\in L(u_2)\setminus\{1,2\}$ (possibly $\gamma'=\gamma$) such that $|L(u_3)\setminus\{2,\beta,\gamma'\}|\geq 2$.
Indeed, if $L(u_2)\setminus L(u_3)\neq \emptyset$, then we know $\gamma\in L(u_2)\setminus\{1,2\}$ and $|L(u_3)\setminus\{2,\beta,\gamma\}|=|L(u_3)\setminus\{2,\beta\}|\geq 2$ by the choice of $\gamma$.
Otherwise, since $L(u_2)=L(u_3)$ and $\{1,2\}\neq \{2,\beta\}$, either $|L(u_3)\setminus\{2,\beta\}|\geq 3$, or $|L(u_3)\setminus\{2,\beta\}|=2$ and $L(u_2)\setminus\{1,2\}\neq L(u_3)\setminus\{2,\beta\}$.
In the former case, we have $|L(u_3)\setminus\{2,\beta,\gamma\}|\geq |L(u_3)\setminus\{2,\beta\}|-1\geq 2$.
In the latter case, there is a color $\gamma'\in (L(u_2)\setminus\{1,2\})\setminus(L(u_3)\setminus\{2,\beta\})$, and thus $|L(u_3)\setminus\{2,\beta,\gamma'\}|=|L(u_3)\setminus\{2,\beta\}|\geq 2$.
Let $\phi(u_2)=\gamma'$.
We choose $\phi(v_2)\in L(v_2)\setminus\{1,2,\alpha,\beta,\gamma'\}$.
Since $|L(u_3)\setminus\{2,\beta,\gamma',\phi(v_2)\}|\geq |L(u_3)\setminus\{2,\beta,\gamma'\}|-1\geq 1$, we can choose a color $\phi(u_3)\in L(u_3)\setminus\{2,\beta,\gamma',\phi(v_2)\}$.
By using Lemma~\ref{lem:outerplanar ear} to the ear $H_1$, we extend $\phi$ to a proper $L$-coloring of $G$ such that $\mathcal{U}_{\phi}(v,G)\neq \emptyset$ for every vertex $v\in V(G)\setminus\{v_2\}$.
Since $d_G(v_2)=4$ and $\phi(v_1)$, $\phi(v_3)$, $\phi(u_2)$ are three distinct colors, we again infer that $\mathcal{U}_{\phi}(v_2,G)\neq \emptyset$, and thus $\phi$ is a proper conflict-free $L$-coloring of $G$.

\begin{subcase}\label{subcase:ear-chain 5}
  $s=3$ and $|V(H_2)|=5$.
\end{subcase}

\noindent
Let $H_2=v_2u_2u_3u_4v_3v_2$.
Since $|L(v_1)|=d_G(v_1)+2=d_{G'}(v_1)+4$, by the induction hypothesis and Proposition~\ref{prop:c5}, $G'$ admits a proper conflict-free $L$-coloring $\phi$.
Without loss of generality, we may assume that $\phi(v_1)=1$ and $\phi(v_3)=2$.
Let $\alpha$ be a color in $\mathcal{U}_{\phi}(v_1,G')$ and $\beta$ be a color in $\mathcal{U}_{\phi}(v_3,G')$.

First, we choose $\tilde{L}(v_2)\subseteq L(v_2)\setminus\{1,2,\alpha,\beta\}$, $\tilde{L}(u_3)\subseteq L(u_3)\setminus\{2\}$ and $\tilde{L}(u_4)\subseteq L(u_4)\setminus\{2,\beta\}$ so that $|\tilde{L}(v_2)|=|\tilde{L}(u_4)|=2$ and $|\tilde{L}(u_3)|=3$.
Next, choose a color $\phi(v_2)\in \tilde{L}(v_2)$ and $\phi(u_4)\in \tilde{L}(u_4)$ as follows.
If $\tilde{L}(v_2)\cap \tilde{L}(u_4)\neq \emptyset$, then we let $\phi(v_2)=\phi(u_4)$ be a color in $\tilde{L}(v_2)\cap \tilde{L}(u_4)$.
Otherwise, since $|\tilde{L}(u_3)|=3$ and $|\tilde{L}(v_2)\cup \tilde{L}(u_4)|=4$, we choose $\phi(v_2)\in \tilde{L}(v_2)$ and $\phi(u_4)\in \tilde{L}(u_4)$ so that $\phi(v_2)$ or $\phi(u_4)$ does not belong to $\tilde{L}(u_3)$.
In both cases, we have $|\tilde{L}(u_3)\setminus\{\phi(v_2),\phi(u_4)\}|\geq 2$.

Now we apply Lemma~\ref{lem:outerplanar ear} to the ear $H_1$ to extend $\phi$ to a proper $L$-coloring of $G-\{u_2,u_3\}$ so that $\mathcal{U}_{\phi}(v,G)\neq \emptyset$ for every vertex $v\in V(G)\setminus\{v_2, u_2, u_3\}$.
Since $v_2$ has three colored neighbors and $\phi(v_1)\neq \phi(v_3)$, there is a color $\gamma\in \mathcal{U}_{\phi}(v_2,G)$.
We choose $\phi(u_2)\in L(u_2)\setminus\{\phi(v_2),\phi(u_4),\gamma\}$.
Since $|\tilde{L}(u_3)\setminus\{\phi(v_2),\phi(u_4)\}|\geq 2$, we can choose $\phi(u_3)\in \tilde{L}(u_3)\setminus\{\phi(v_2),\phi(u_2),\phi(u_4)\}$.
It is easy to verify that $\phi$ is a proper conflict-free $L$-coloring of $G$.

\medskip
This completes the proof of Case~\ref{case:ear-chain} and the proof of Theorem~\ref{thm:outerplanar}.

\section{Concluding remarks and futher work}\label{sec:conclusion}

In this paper, we show that every connected outerplanar graph other than the $5$-cycle is proper conflict-free $({\rm degree}+2)$-choosable.
There are infinitely many connected outerplanar graphs that are not proper conflict-free $({\rm degree}+1)$-choosable, and hence we cannot reduce $({\rm degree}+2)$ to $({\rm degree}+1)$.

Since the graph $G$ constructed in the proof of Proposition~\ref{prop:degree+1} has a cut-vertex $v$, a possible question is whether there is a $2$-connected graph that is not proper conflict-free $({\rm degree}+1)$-choosable.
It was shown in Caro, Petru\v{s}evski, and \v{S}krekovski~\cite{CPS2023} that the cycle of length $\ell$ is not proper conflict-free $3$-colorable if $\ell$ is not divisible by $3$.
Hence, cycles of length $\ell\not\equiv 0 \pmod{3}$ are examples of $2$-connected graphs that are not proper conflict-free $({\rm degree}+1)$-choosable.

Other examples are theta graphs with specified parameters.
For positive integers $a$, $b$ and $c$, \emph{Theta graph} $\Theta_{a,b,c}$ is a graph consisting of two vertices of degree $3$ and three internally disjoint paths of length $a$, $b$ and $c$ joining them.
Then the following holds.

\begin{proposition}\label{prop:theta graph}
    Let $\ell_1\geq 4$ and $\ell_2\geq 4$ be integers such that $\ell_1\equiv 1\pmod{3}$ and $\ell_2\equiv 1\pmod{3}$.
    Then, Theta graph $\Theta_{1,\ell_1,\ell_2}$ is not proper conflict-free $({\rm degree}+1)$-choosable.
\end{proposition}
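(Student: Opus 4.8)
The plan is to exhibit a single bad $({\rm degree}+1)$-list assignment. Write $u$ and $w$ for the two vertices of degree $3$, joined directly by the length-$1$ path (the edge $uw$), and let $P_1=ux_1x_2\cdots x_{\ell_1-1}w$ and $P_2=uy_1y_2\cdots y_{\ell_2-1}w$ be the two long paths, whose internal vertices all have degree $2$ (these exist since $\ell_1,\ell_2\geq 4$). Note that $\Theta_{1,\ell_1,\ell_2}$ is a cycle of length $\ell_1+\ell_2$ together with the chord $uw$, so it is indeed a $2$-connected outerplanar graph. I would assign $L(u)=L(w)=\{1,2,3,4\}$ and $L(z)=\{1,2,3\}$ to every internal vertex $z$; this is a $({\rm degree}+1)$-list assignment because $d(u)=d(w)=3$ while every internal vertex has degree $2$. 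The goal is then to show that $\Theta_{1,\ell_1,\ell_2}$ admits no proper conflict-free $L$-coloring.

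First I would record the local effect of the conflict-free condition at a degree-$2$ vertex: if $v$ has exactly the two neighbours $a,b$, then $\mathcal{U}_{\phi}(v,G)\neq\emptyset$ forces $\phi(a)\neq\phi(b)$, since otherwise the single color on $N(v)$ appears twice. Combined with properness, this says that on each path every three consecutive vertices receive pairwise distinct colors. Because all internal vertices are confined to $\{1,2,3\}$, this rainbow condition forces each internal color to be the unique element of $\{1,2,3\}$ missing from its two path-neighbours, so the internal coloring of each path is periodic with period $3$.

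Next I would convert the congruence $\ell_i\equiv 1\pmod 3$ into a rigid phase relation. Tracking the period-$3$ pattern along $P_1$ starting from $x_1$ gives $\phi(x_{\ell_1})=\phi(x_1)$ whenever $\phi(w)=\phi(x_{\ell_1})\in\{1,2,3\}$, that is, $\phi(w)=\phi(x_1)$; symmetrically $\phi(u)=\phi(x_{\ell_1-1})$ when $\phi(u)\in\{1,2,3\}$, and the same relations hold for $P_2$. The care needed here, namely checking that the recurrence propagates correctly when an endpoint carries the extra colour $4$ (which merely releases that endpoint from the pattern without altering the phase seen by the first or last internal vertex), is the step I expect to be the most delicate; getting the indices modulo $3$ exactly right is where an error would hide.

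Finally I would derive the contradiction at $u$ or $w$. Since $uw\in E(G)$, properness prevents both endpoints from receiving colour $4$. If $\phi(w)\in\{1,2,3\}$, then the three neighbours of $u$, namely $w$, $x_1$, and $y_1$, all receive colour $\phi(w)$, so $\mathcal{U}_{\phi}(u,G)=\emptyset$; if instead $\phi(w)=4$, then $\phi(u)\in\{1,2,3\}$ and the three neighbours of $w$, namely $u$, $x_{\ell_1-1}$, and $y_{\ell_2-1}$, all receive colour $\phi(u)$, so $\mathcal{U}_{\phi}(w,G)=\emptyset$. In either case the conflict-free condition fails at a degree-$3$ vertex, contradicting the assumed existence of $\phi$. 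Hence $L$ admits no proper conflict-free coloring, and $\Theta_{1,\ell_1,\ell_2}$ is not proper conflict-free $({\rm degree}+1)$-choosable.
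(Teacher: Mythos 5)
Your proposal is correct and follows essentially the same route as the paper: the identical list assignment ($\{1,2,3,4\}$ on the two degree-$3$ vertices, $\{1,2,3\}$ on all internal vertices), the same observation that the conflict-free condition at degree-$2$ vertices forces a period-$3$ pattern along each long path, and the same use of $\ell_1\equiv\ell_2\equiv 1\pmod 3$ to make all three neighbours of one hub monochromatic. The only cosmetic difference is that the paper avoids your endpoint case analysis by normalizing $\phi(v_1)=1$ up front (one of the two hubs must avoid colour $4$ since they are adjacent) and running the recurrence from that hub.
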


\begin{proof}
    Let $V(G)=\{v_1,v_2\}\cup \{u_i\mid i\in [\ell_1-1]\}\cup \{w_j\mid j\in [\ell_2-1]\}$ such that $v_1v_2$, $v_1u_1u_2\cdots u_{\ell_1-1}v_2$, and $v_1w_1w_2\cdots w_{\ell_2-1}v_2$ respectively form paths of length $1$, $\ell_1$, and $\ell_2$ of $G$.
    Note that $d_G(v_1)=d_G(v_2)=3$ and all $u_i$'s and $w_j$'s are of degree $2$.
%    $d_G(u_i)=d_G(w_j)=2$ for every $i\in [\ell_1-1]$ and every $j\in [\ell_2-1]$.
    Let $L$ be a list assignment of $G$ such that $L(v_1)=L(v_2)=\{1,2,3,4\}$ and $L(u_i)=L(w_j)=\{1,2,3\}$ for every $i\in [\ell_1-1]$ and every $j\in [\ell_2-1]$.
    We show that $G$ is not proper conflict-free $L$-colorable.

    Suppose that $G$ admits a proper conflict-free $L$-coloring $\phi$.
    Since $v_1v_2\in E(G)$, without loss of generality, we may assume that $\phi(v_1)=1$.
    Let $u_0=v_1$.
    Since $\phi(u_i)\in L(u_i)\setminus\{\phi(u_{i-2}),\phi(u_{i-1})\}$ for every $i\in \{2,3,\dots , \ell_1-1\}$, it follows that $\phi(u_i)=\phi(u_{i'})$ for every $i,i'\in \{0,1,\dots , \ell_1-1\}$ with $i\equiv i'\pmod{3}$.
    In particular, since $\ell_1\equiv 1\pmod{3}$, we infer that $\phi(u_{\ell_1-1})=\phi(u_0)=1$.
    Similarly, we have $\phi(w_{\ell_2-1})=1$.
    Then the vertex $v_2$ has three neighbors $v_1$, $u_{\ell_1-1}$, and $w_{\ell_2-1}$ with color $1$, a contradiction.
\end{proof}

The only 2-connected graphs we know that are not proper conflict-free $({\rm degree}+1)$-choosable are the above two examples; that is, cycles of length $\ell\not\equiv 0 \pmod{3}$ and theta graphs $\Theta_{1,\ell_1,\ell_2}$ with $\ell_1\equiv\ell_2\equiv 1 \pmod{3}$.
So one may consider the following problem for further research.

\begin{problem}\label{problem:2conn}
    Find some other (possibly all) 2-connected graphs that are not proper conflict-free $({\rm degree}+1)$-choosable.
\end{problem}

In our constructions of graphs that are not proper conflict-free $({\rm degree}+1)$-choosable, vertices of degree $2$ play important roles.
We wonder whether things change by excluding them.

\begin{question}\label{question:min degree 3}
    Is every graph with minimum degree at least $3$ proper conflict-free $({\rm degree}+1)$-choosable?
\end{question}

\section*{Acknowledgement}

We appreciate the referees for their careful reading of the manuscript and pointing out a fault in a lemma in the previous version.

M. Kashima has been supported by JSPS KAKENHI Grant No. 25KJ2077.
R. \v{S}krekovski has been partially supported by the Slovenian Research Agency and Innovation (ARIS) program P1-0383, project J1-3002, and the annual work program of Rudolfovo. 
R. Xu has been supported by National Science Foundation for Young Scientists of China under Grant No. 12401472, and Zhejiang Provincial Natural Science Foundation of
China under Grant No. LQN25A010011.

\end{document}